\newcommand{\mybox}{%
    \collectbox{%
        \setlength{\fboxsep}{1pt}%
        \fbox{\BOXCONTENT}%
    }%
}
\theoremstyle{plain}
\newtheorem{lem}{Lemma}[section]
\newtheorem{thm}[lem]{Theorem}
\newtheorem{cor}[lem]{Corollary}
\newtheorem{prop}[lem]{Proposition}
\newtheorem{conj}[lem]{Conjecture}
\theoremstyle{definition}
\newtheorem{defi}[lem]{Definition}
\newtheorem{rem}[lem]{Remark}
\newtheorem{ex}[lem]{Example}
\DeclareMathOperator*{\rank}{rank}
\newcommand{\C}{\mathbb{C}}
\newcommand{\T}{\mathcal{T}}
\newcommand{\F}{\mathcal{F}}
\newcommand{\GL}{\mathrm{GL}}
\newcommand{\Lc}{\mathcal{L}} 
\newcommand{\X}{\mathcal{X}}
\newcommand{\gn}{\mathfrak{n}}
\newcommand{\tr}{\textup{trc}}
\newcommand{\trp}{\textup{trp}}
\newcommand{\Id}{\textup{Id}}
\newcommand{\e}{\varepsilon}
\newcommand{\codim}{\mathrm{codim }}
\def\e{\varepsilon}
\begin{document}

\title{On tangent cones of Schubert varieties}

\author{D. Fuchs,
A. Kirillov,
S. Morier-Genoud,
V. Ovsienko }

\address{Dmitry Fuchs,
Mathematical Sciences Building 
One Shields Ave.
University of California
Davis, CA 95616
fuchs@math.ucdavis.edu
}

\address{Alexandre Kirillov,
Department of Mathematics,
209 South 33rd Street,
University of Pennsylvania
Philadelphia, PA 19104-6395,
kirillov@math.upenn.edu}

\address{Sophie Morier-Genoud,
Sorbonne Universit\'es, UPMC Univ Paris 06, 
Institut de Math\'ematiques de Jussieu-Paris Rive Gauche, UMR 7586, CNRS, 
Univ Paris Diderot, Sorbonne Paris Cit\'e, F-75005, Paris, France,
sophie.morier-genoud@imj-prg.fr
}

\address{
Valentin Ovsienko,
CNRS,
Laboratoire de Math\'ematiques 
U.F.R. Sciences Exactes et Naturelles 
Moulin de la Housse - BP 1039 
51687 REIMS cedex 2,
France,
valentin.ovsienko@univ-reims.fr}

\subjclass[2010]{}
\date{}

\begin{abstract}
We consider tangent cones of Schubert varieties in the complete flag variety,
and investigate the problem when the tangent cones of two different Schubert varieties
coincide.
We give a sufficient condition for such coincidence, and formulate a conjecture
that provides a necessary condition.
In particular, we show that all Schubert varieties corresponding to the Coxeter elements
of the Weyl group have the same tangent cone.
Our main tool is the notion of pillar entries in the rank matrix counting the
dimensions of the intersections of a given flag with the standard one.
This notion is a version of Fulton's essential set.
We calculate the dimension of a Schubert variety in terms of the pillar entries
of the rank matrix.
\end{abstract}
\maketitle

\thispagestyle{empty}

\tableofcontents

\section{Introduction}

Let $\F$ be the algebraic variety of all complete flags in $\C^n$. Recall that a complete flag $F\in\F$ is an increasing sequence of subspaces
\begin{equation*}
\label{FlagEq}
\{0\}=V_0\subset{}V_1\subset{}V_2\subset\cdots\subset{}V_n=\C^n,
\qquad\dim{}V_k=k.
\end{equation*}
Choosing the standard basis $\{\e_1,\ldots,\e_n\}$ of $\C^n$, one defines the standard flag, $F_0\in\F$, for which $V_k=\C^k:=\langle\e_1,\ldots\e_k\rangle$, for all $1\leq{}k\leq{}n$. The group $\GL(n,\C)$ of linear transformations of $\C^n$ 
transitively acts on $\F$. The Borel subgroup $B\subset\GL(n,\C)$ of upper-triangular matrices is the stabilizer of the standard flag $F_0$, so $\F=\GL(n,\C)/B$.

Let us recall some well-known facts. The group $B$ acts naturally on $\F$ (by left multiplication). The variety $\F$ is a disjoint union of $B$-orbits called {\it Schubert cells}. Schubert cells are indeed cells of the most classical CW  decomposition of $\F$. Schubert cells are parametrized by elements of the symmetric group~$S_n$. Namely, the group~$S_n$ acts naturally in $\C^n$, and hence in $\F$, and for every $w\in S_n$, there exists a unique Schubert cell, which contains the $w$-image of the standard flag $F_0$. We denote this cell by ${\mathcal C}_w$. Its complex dimension is equal to the {\it length} of $w$,  i.e., the minimal $\ell$ in a decomposition$$w=s_{i_1}s_{i_2}\cdots{}s_{i_\ell},$$where $s_i\in{}S_n$ are the elementary transpositions. The number of Schubert cells of complex dimension $m$ is the coefficient at $t^m$ in the polynomial 
$$
\prod_{k=1}^n(1+t+\ldots+t^k).
$$ 
In particular, there is a unique 0-dimensional cell, which is $F_0$, and a unique 
$\frac{n(n-1)}2$-dimensional cell, which is dense in $\F$.

The closure $\X_w$ of a Schubert cell ${\mathcal C}_w$ is called a {\it Schubert variety}. The Schubert variety $\X_w$ is the union of the Schubert cell ${\mathcal C}_w$ and all Schubert cells 
${\mathcal C}_{w'}$ corresponding to permutations~$w'$ which precede $w$ with respect to the natural partial ordering of $S_n$ (the Bruhat order). In particular, every Schubert variety contains the point $F_0$.

With a Schubert variety $\X_w$, we associate two subsets of the tangent space $T_{F_0}\F$: 
\begin{itemize}
\item
the {\it tangent cone} $\T_w$, which is the set of vectors tangent to $\X_w$ at $F_0$;
\item
 the Zariski {\it tangent space} $\mathcal{Z}_w$ which is {\it spanned} by $\T_w$. 
 \end{itemize}

 The tangent cones $\T_w$ are algebraic subvarieties of $T_{F_0}\F$; they have the same dimensions as $\X_w$ (and ${\mathcal C}_w$). The tangent cone $\T_w$ and tangent space ${\mathcal Z}_w$ (as well as their dimensions) coincide if and only if $F_0$ is not a singular point of $\X_w$. 

Certainly, the Schubert varieties $\X_w$ and $\X_{w'}$ coincide only when $w=w'$; however, the equalities ${\mathcal Z}_w={\mathcal Z}_{w'}$ or $\T_w=\T_{w'}$ may occur for $w\neq w'$ (since the second implies the first, the first occurs ``more often" than the second). 

For the further discussion, let us introduce the most natural local coordinate system in a (Zariski) neighborhood of $F_0$ in $\F$. For a flag $\{V_k\}$ sufficiently ``close" to $F_0$, there exists a unique ``triangular" basis in $\C^n$,
$$
v_1=
\left(
\begin{array}{l}
1\\
x_{21}\\
x_{31}\\
\vdots\\
x_{n1}
\end{array}
\right),
\qquad
v_2=
\left(
\begin{array}{l}
0\\
1\\
x_{32}\\
\vdots\\
x_{n2}
\end{array}
\right),
\quad\dots,\qquad
v_n=
\left(
\begin{array}{l}
0\\
0\\
\vdots\\
0\\
1
\end{array}
\right)
$$
such that $V_k$ is spanned by $v_1,\dots,v_k$. The numbers $x_{ij},i>j$, are coordinates of the flag $\{V_k\}$ (with $F_0=(0,\dots,0)$); the same numbers may be regarded as coordinates in $T_{F_0}\F$. (This coordinate system provides a natural identification of $T_{F_0}\mathcal F$ with the space ${\mathfrak n}_-$ of strictly lower triangular matrices.) When $n$ is not too large, we will use the more convenient notations $x_i=x_{i,i+1},y_i=x_{i,i+2}$, etc. 

Zariski tangent spaces ${\mathcal Z}_w$ were thoroughly studied, see \cite{Pol, Lak1,Lak} and references therein. The following result of Lakshmibai~\cite{Lak1} provides an explicit description of ${\mathcal Z}_w$. The space~${\mathcal Z}_w$, viewed as a subspace of ${\mathfrak n}_-$, is the linear span of the elements $e_{-\alpha}$ of the Chevalley basis, such that 
$$
\alpha\in R^+,\; s_\alpha\leq w,
$$ 
where $R^+$ is the set of positive roots,
and $s_\alpha\in S_n$ is the reflection associated with $\alpha$, and $\leq$ is the Bruhat order. 
The above result, of course, answers the question, under which condition two different Schubert varieties $\X_w$ and $\X_{w'}$ have the same Zariski tangent space. On the contrary, the structure of tangent cones~$\T_w$, although it has been an active area of research (see~\cite{Lak,Bri,CK,EP,BIS} and references therein), is not well understood, in particular, the problem of their coincidence is mostly open.

Let us consider some examples. If $n=3$, then $\dim\F=3$ and the local coordinates are $x_1,x_2,y$. There are 6 Schubert varieties of dimensions $0,1,1,2,2,3$, and the middle four are: 
$$
\X_{213}=\{V_1=\C^1\},\quad
 \X_{213}=\{V_2=\C^2\}, \quad
 \X_{231}=\{V_1\subset\C^2\},\quad
 \X_{312}=\{V_2\supset\C^1\}.
 $$
 In our local coordinates these are 
 $x_1=y=0, \,
 x_2=y=0, \,
 y=0,\,
 y=x_1x_2$, respectively. 
 We see that, within the domain of our coordinate system, $\X_{231}$ is the tangent plane (at the origin) to $\X_{312}$; thus $\T_{231}=\T_{312}={\mathcal Z}_{231}={\mathcal Z}_{312}$.

The first examples of singular Schubert varieties appear when $n=4$. There are two of them, 
cf.~\cite{LS}: 
$$
\X_{3412}=\{V_1\subset\C^3,\, \C^1\subset V_3\}
\quad\hbox{and}\quad
\X_{4231}=\{V_2\cap\C^2\neq0\}.
$$
Our local coordinates in the 6-dimensional manifold $\F$ are $x_1,x_2,x_3,y_1,y_2,z$, the equations of the two Schubert varieties are 
$$
z=0,\quad y_1x_3+x_1y_2-x_1x_2x_3=0
\qquad\hbox{and}
\qquad
y_1y_2-zx_2=0,
$$ 
 respectively, and the tangent cones are the cone $y_1x_3+x_1y_2=0$ in the hyperplane $z=0$ and the cone $y_1y_2-zx_2=0$ in the whole space $T_{F_0}\F$. It is not difficult to observe that the~$24$ Schubert varieties have $16$ different tangent cones and $14$ different tangent spaces.

For $n=5$, we observe not only singular, but also reducible tangent cones (the Schubert varieties themselves are always irreducible). Moreover, different tangent cones can share components and even contain each other. The simplest example
is provided by the 8-dimensional Schubert varieties 
$$
\X_{35421}=\{V_1\subset\C^3\},
\quad 
\X_{43521}=\{V_2\subset\C^4\}
\quad\hbox{and}\quad
\X_{45231}=\{V_1\subset\C^4,\C^2\cap V_3\neq0\}.
$$
With respect to the local coordinates $x_1,x_2,x_3,x_4,y_1,y_2,y_3,z_1,z_2,t$, the first two varieties (and hence their tangent cones) are linear subspaces $z_1=t=0$ and $z_2=t=0$, while the third one is described by the equations $t=0, \det\left[\begin{array} {ccc} y_1&x_2&1\\ z_1&y_2&x_3\\ 0&z_2&y_3\end{array}\right]=0$. This shows that the tangent cone $\T_{45231}$ is $\{t=z_1z_2=0\}$, and this is the union $\T_{35421}\cup\T_{43521}$.

In this paper, we study the structure of the tangent cones $\T_w$ with the emphasis on the problem of their coincidence. Let us mention two cases when the coincidence of these tangent cones is known, or can be easily proved. The first one is the equality $\T_w=\T_{w^{-1}}$ which holds for every permutation $w$. This fact was conjectured (and checked for $n\le5$) in~\cite{EP}; however, a short direct proof can be easily given, see Section~\ref{proofdirect}. The second case is that of {\it Coxeter elements} of the permutation group. Recall that an element $w\in S_n$ is called a Coxeter element, if it is of length $n-1$ and can be written in the form$$w=s_{i_1}s_{i_2}\cdots{}s_{i_{n-1}}$$in such a way that every transposition $s_i$, for $i=1,2,\ldots,n-1$ enters the above product exactly once. The group $S_n$ has $2^{n-2}$ different Coxeter elements. 
The Schubert varieties which correspond to the Coxeter elements of $S_n$ have the same tangent cone, namely the one given by the equations
$$
x_{ij}=0,
\qquad\hbox{for}\quad
i-j>1.
$$
By the way, our example of coinciding tangent cones for $n=3$ represents both cases: the permutations 
$132$ and $321$ are Coxeter elements inverse to each other. For $n=4$, all pairs of permutations with equal tangent cones are either Coxeter, or inverse to each other. However, for $n=5$, there appear pairs of non-inverse and non-Coxeter permutations with equal tangent cones; the first example of such a pair is $(13452,13524)$.

We develope an efficient method to recognize when the tangent cones of two Schubert varieties coincide. The main ingredient of this method is the notion of a {\it pillar entry}. Every Schubert cell of the flag variety is determined by the $(n+1)\times(n+1)$ matrix of dimensions~$r_{ij}$ of the intersections  $V_i\cap\C^j$ called the {\it rank matrix\/}; 
the corresponding Schubert variety is determined by inequalities $\dim(V_i\cap\C^j)\ge r_{ij}$. 
For example, if $\left[r_{ij}\right]$ is the rank matrix corresponding to a permutation $w$, then the rank matrix corresponding to $w^{-1}$ is obtained from~$\left[r_{ij}\right]$ by a transposition. 
In Section~5.6, we prove that the whole matrix $\left[r_{ij}\right]$ is determined by a relatively small set of entries, which we call {\it pillar entries} (see Section~\ref{PilDeSect} for a precise definition). 
Note that the notion of pillar entry is very close (yet different from) Fulton's notion of essential set~\cite{Ful1}, see also~\cite{EL,Woo,RWY} and the Appendix for a comparison.

We conjecture that if $\T_w=\T_{w'}$, then the pillar entries for $w'$ are obtained from pillar entries for $w$ by a {\it partial transposition}.
This means tat there is a one-to-one correspondence between pillar entries
 $r_{ij}$ and $r'_{ij}$ for $w$ and $w'$ such that the pillar entry corresponding to $r_{ij}$ is
either $r'_{ij}=r_{ij}$ or $r'_{ji}=r_{ij}$; see Section~\ref{CoSec}, Conjecture~\ref{TheConjThm} for a precise statement. However, the converse of this conjecture is false: examples show that a partial transposition of the set of pillar entries may lead to a set of entries which is not the set of pillar entries for any transposition, or is a set of pillar entries of a transposition of a different length. 
Some pillar entries are ``linked," that is, they can be transposed or not transposed only simultaneously. 

In Section~\ref{FMTSec}, we give some definition of a linkage, and hence of ``admissible partial transposition"; our main result is Theorem~\ref{TheThm}, which states that an admissible partial transposition of pillars entries of $w$ provides a set of pillar entries of some $w'$, 
and that in this case~$\T_w=\T_{w'}$. 
However, examples show that our definition of linkage is not sufficient: there are partial transpositions of pillar entries, which are not admissible in our sense, but which still preserve the tangent cone.

In Section~\ref{CombSect}, we study combinatorics of rank matrices and pillar entries.
In particular, we present a formula (see Theorem~\ref{codimDBF}) of (co)dimension of a Schubert variety
in terms of the pillar entries of the corresponding rank matrix.
We also present an algorithm that reconstructs a given permutation from
the corresponding pillar entries.

We also provide a number of examples and several enumerative results in small dimension and codimension.
We were led by the numeric examples to the following ``$2^m$-conjecture''
which is also closely related with the earlier mentioned conjecture:
{\it the number of Schubert varieties with an identical tangent cone is always a power of $2$}.

Let us  mention that the problem of classification of tangent cones of Schubert varieties is closely related to the problem of classification of coadjoint orbits of the unitriangular group, see~\cite{Kir,And} and the recent work~\cite{Pan}. As we already said, the tangent space to the flag variety is naturally identified with the nilpotent Lie algebra of lower-triangular matrices, and with the dual space of the Lie algebra of upper-triangular matrices: $$T_{F_0}\F\simeq\gn_{-}\simeq\gn_{+}^*.$$ The $B$-action on $T_{F_0}\F$ then coincides with the coadjoint action. Every tangent cone $\T_w$ is $B$-invariant, as well as any irreducible component of $\T_w$; thus, it is a set of $B$-orbits. However, it is not true that $B$-orbits and irreducible components of tangent cones are the same thing. The first example which demonstrates this appears in $S_6$: the $10$-dimensional tangent cone $\T_{354621}$ is a union of $9$-dimensional $B$-orbits. We will not discuss this phenomenon in this paper.

\section{Basic notions and main conjecture}

We recall the classical notion (and some properties) of rank matrix 
associated with two flags.
Rank matrices
provide a combinatorial way to characterize
Schubert varieties and Schubert cells.
Indeed, one of these flags will be chosen as the standard flag,
so that the rank matrix coincides with the rank function of
the corresponding permutation; see~\cite{Ful,Ful1}.
We then define the notion of pillar entry of a rank matrix
which is crucial for us.

We formulate our first conjecture that if two permutations, $w$ and $w'$,
have identical tangent cones: $\T_w=\T_{w'}$,
then the pillar entries of the corresponding rank matrices
either coincide or transposed to each other.

\subsection{Rank matrix}\label{rkmat}
For any flag, the {\it rank matrix} is the $(n+1)\times(n+1)$ matrix $r=(r_{ij})$
with the integer entries
$$
r_{ij}=\dim{}V_i\cap\C^j,
\qquad
0\leq{}i,j\leq{}n.
$$
The rank matrix is independent of the choice of
a flag in a $B$-orbit.
Moreover, it
completely characterizes the corresponding $B$-orbit.
More precisely, two different flags, $F\in{}{\mathcal C}_w$ and $F'\in{}{\mathcal C}_{w'}$,
have the same rank matrix if and only if $w=w'$; see, e.g.,~\cite{Ful}.
We will denote by~$r(w)$ the rank matrix corresponding
to the Schubert cell ${\mathcal C}_w$.

Obviously, one has:
$$
\begin{array}{l}
r_{0k}=r_{k0}=0;
\qquad
 r_{kn}=r_{nk}=k;\\[4pt] 
 r_{ij}+r_{i+1,j+1}\geq r_{i+1,j}+r_{i,j+1};\\[4pt] 
  r_{i,j+1}- r_{ij}=0\quad\hbox{or}\quad1;\\[4pt] 
  r_{i+1,j}- r_{ij}=0\quad\hbox{or}\quad1. 
  \end{array}
 $$
 Every integer matrix $(r_{ij})$ with the above properties is the rank matrix of some flag.

The following statement is due to~\cite{Ful1}, see also~\cite{Ful} p.157.
The Schubert cell ${\mathcal C}_{w}$ consists in flags such that
the corresponding rank matrix is:
\begin{equation}
\label{Ful}
r_{ij}=\#\{k\leq{}i\;\vert\; w(k)\leq{}j\}.
\end{equation}

\begin{ex}
\label{InvCor}
The rank matrices $r(w)$ and~$r(w^{-1})$ are
transposed to each other.
In this case, one has: 
$$
\T_w=\T_{w^{-1}}.
$$
This statement
 was conjectured
(and checked for $n\leq5$) in~\cite{EP}.
However a short direct proof can be easily given, see Section \ref{proofdirect}. \end{ex}

\subsection{Permutation diagram}

The permutation $w\in{}S_n$ can be easily recovered 
from the rank matrix.

\begin{defi}
Given a permutation $w \in S_{n}$, the \textit{diagram of } $w$
is defined with the following convention.
In an $(n+1)\times(n+1)$ grid, with row and columns numbered form $0$ to $n$, we place a dot in the upper left corner of the cell with coordinates $(i,j)$ whenever $j=w(i)$. 
\end{defi}

To make this visible, we usually put  a $\bullet$ into the matrix,
so that the permutation is encoded by the dots.

\begin{prop}
If the rank matrix $r(w)$ is locally as follows:
$$
\begin{array}{c|c}
a&a\\
\hline
a&\!\!\!\!{}^{{}^\bullet}a+1
\end{array}
$$
where $a+1$ is the value in position $(i,j)$,
then the permutation $w$ sends $i$ to $j$.
\end{prop}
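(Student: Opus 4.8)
The plan is to read off the permutation value directly from the defining formula~\eqref{Ful} for the rank matrix, $r_{ij}=\#\{k\le i\mid w(k)\le j\}$, using the four displayed entries. First I would note that the hypothesis says
$$
r_{i-1,j-1}=r_{i-1,j}=r_{i,j-1}=a,\qquad r_{ij}=a+1,
$$
where I take the $2\times 2$ block to occupy rows $i-1,i$ and columns $j-1,j$ (so that $(i,j)$ is the lower-right corner, matching ``$a+1$ is the value in position $(i,j)$'').

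The key computation is to compare $r_{ij}$ with the three neighbours. From $r_{ij}-r_{i-1,j}=1$ and the identity $r_{ij}-r_{i-1,j}=\#\{k=i\mid w(k)\le j\}$, we learn $w(i)\le j$. From $r_{ij}-r_{i,j-1}=1$, which counts $\#\{k\le i\mid w(k)=j\}$, we learn that $j$ is a value taken by $w$ on $\{1,\dots,i\}$, i.e.\ $w^{-1}(j)\le i$. It remains to exclude $w^{-1}(j)<i$, equivalently to show the unique index $k\le i$ with $w(k)=j$ is $k=i$. This is where the entry $r_{i-1,j-1}=r_{i-1,j}$ comes in: since $r_{i-1,j}-r_{i-1,j-1}=\#\{k\le i-1\mid w(k)=j\}=0$, no index $k\le i-1$ is sent to $j$; combined with $w^{-1}(j)\le i$, this forces $w(i)=j$. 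Symmetrically, one can double-check consistency via $w(i)\le j$ together with $r_{i,j-1}=r_{i-1,j-1}$ giving $w(i)\ge j$, so that $w(i)=j$ again.

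I do not anticipate a real obstacle here: the statement is essentially a local decoding of Fulton's formula~\eqref{Ful}, and every step is a one-line difference of cardinalities. The only point requiring a little care is bookkeeping of indices — which rows and columns the $2\times 2$ block occupies and whether the inequalities in~\eqref{Ful} are strict — so I would state the index conventions explicitly at the outset and then let the four cardinality computations above conclude. One could alternatively phrase the whole argument in terms of the permutation diagram of the preceding definition: the jumps $r_{ij}-r_{i-1,j}=1$ and $r_{ij}-r_{i,j-1}=1$ with no jump along the top edge of the block say precisely that the unique dot weakly northwest of $(i,j)$ contributing the extra $+1$ sits in cell $(i,j)$ itself, which is the assertion $w(i)=j$.
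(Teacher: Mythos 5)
Your argument is correct and is exactly the paper's approach: the paper's proof consists of the single remark that the claim ``readily follows from~(\ref{Ful})'', and your cardinality differences $r_{ij}-r_{i,j-1}=\#\{k\le i\mid w(k)=j\}=1$ together with $r_{i-1,j}-r_{i-1,j-1}=\#\{k\le i-1\mid w(k)=j\}=0$ are precisely the routine decoding the authors leave to the reader. No gaps; the index conventions you fix at the outset match the intended reading of the $2\times 2$ block.
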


\begin{proof}
This readily follows from~(\ref{Ful}).
\end{proof}

\begin{ex}
\label{Exn4}
Consider the case of dimension $4$.

a) The matrices
$$
\begin{array}{|c|c|c|c|c|}
\hline
0&0&0&0&0\\
\hline
0&0&0&0&\!\!\!\!{}^{{}^\bullet}\,1\\
\hline
0&0&0&\!\!\!\!{}^{{}^\bullet}\,1&2\\
\hline
0&0&\!\!\!\!{}^{{}^\bullet}\,1&2&3\\
\hline
0&\!\!\!\!{}^{{}^\bullet}\,1&2&3&4\\
\hline
\end{array}
\qquad
\begin{array}{|c|c|c|c|c|}
\hline
0&0&0&0&0\\
\hline
0&\!\!\!\!{}^{{}^\bullet}\raisebox{.3pt}{\textcircled{\raisebox{-.9pt} {1}}}\!\!\!&1&1&1\\
\hline
0&1&\!\!\!\!{}^{{}^\bullet}\raisebox{.3pt}{\textcircled{\raisebox{-.9pt} {2}}}\!\!\!&2&2\\
\hline
0&1&2&\!\!\!\!{}^{{}^\bullet}\raisebox{.3pt}{\textcircled{\raisebox{-.9pt} {3}}}\!\!\!&3\\
\hline
0&1&2&3&\!\!\!\!{}^{{}^\bullet}\,4\\
\hline
\end{array}
$$
are the rank matrices corresponding to the longest element $w_0=4321$ and the identity element $w=1234$, respectively.

The encircled entries will be later called ``pillar'', these entries determine
the whole matrix, as explained in the next subsection.

b) The following matrices:
$$
\begin{array}{|c|c|c|c|c|}
\hline
0&0&0&0&0\\
\hline
0&0&\!\!\!\!{}^{{}^\bullet}\raisebox{.3pt}{\textcircled{\raisebox{-.9pt} {1}}}\!\!\!&1&1\\
\hline
0&0&1&\!\!\!\!{}^{{}^\bullet}\raisebox{.3pt}{\textcircled{\raisebox{-.9pt} {2}}}\!\!\!&2\\
\hline
0&0&1&2&\!\!\!\!{}^{{}^\bullet}\,3\\
\hline
0&\!\!\!\!{}^{{}^\bullet}\,1&2&3&4\\
\hline
\end{array}
\qquad
\begin{array}{|c|c|c|c|c|}
\hline
0&0&0&0&0\\
\hline
0&0&\!\!\!\!{}^{{}^\bullet}\raisebox{.3pt}{\textcircled{\raisebox{-.9pt} {1}}}\!\!\!&1&1\\
\hline
0&0&1&1&\!\!\!\!{}^{{}^\bullet}\,2\\
\hline
0&\!\!\!\!{}^{{}^\bullet}\,1&\!\!\raisebox{.3pt}{\textcircled{\raisebox{-.9pt} {2}}}\!\!\!&2&3\\
\hline
0&1&2&\!\!\!\!{}^{{}^\bullet}\,3&4\\
\hline
\end{array}
\qquad
\begin{array}{|c|c|c|c|c|}
\hline
0&0&0&0&0\\
\hline
0&0&0&\!\!\!\!{}^{{}^\bullet}\,1&1\\
\hline
0&\!\!\!\!{}^{{}^\bullet}\raisebox{.3pt}{\textcircled{\raisebox{-.9pt} {1}}}\!\!\!
&1&\!\!\raisebox{.3pt}{\textcircled{\raisebox{-.9pt} {2}}}\!\!\!&2\\
\hline
0&1&1&2&\!\!\!\!{}^{{}^\bullet}\,3\\
\hline
0&1&\!\!\!\!{}^{{}^\bullet}\,2&3&4\\
\hline
\end{array}
\qquad
\begin{array}{|c|c|c|c|c|}
\hline
0&0&0&0&0\\
\hline
0&0&0&0&\!\!\!\!{}^{{}^\bullet}\,1\\
\hline
0&\!\!\!\!{}^{{}^\bullet}\raisebox{.3pt}{\textcircled{\raisebox{-.9pt} {1}}}\!\!\!
&1&1&2\\
\hline
0&1&\!\!\!\!{}^{{}^\bullet}\raisebox{.3pt}{\textcircled{\raisebox{-.9pt} {2}}}\!\!\!&2&3\\
\hline
0&1&2&\!\!\!\!{}^{{}^\bullet}\,3&4\\
\hline
\end{array}
$$
are the rank matrices corresponding to the four Coxeter elements in $S_4$: 
$$
s_1s_2s_3=2341,
\qquad
s_1s_3s_2=2413,
\qquad
s_2s_1s_3=3142,
\qquad
s_3s_2s_1=4123,
$$
respectively.

c)
Consider the elements $w_1=3412$ and $w_2=4231$ of $S_4$. 
The corresponding rank matrices are
$$
\begin{array}{|c|c|c|c|c|}
\hline
0&0&0&0&0\\
\hline
0&0&0&\!\!\!\!{}^{{}^\bullet}\raisebox{.3pt}{\textcircled{\raisebox{-.9pt} {1}}}\!\!\!&1\\
\hline
0&0&0&1&\!\!\!\!{}^{{}^\bullet}\,2\\
\hline
0&\!\!\!\!{}^{{}^\bullet}\raisebox{.3pt}{\textcircled{\raisebox{-.9pt} {1}}}\!\!\!&1&2&3\\
\hline
0&1&\!\!\!\!{}^{{}^\bullet}\,2&3&4\\
\hline
\end{array}
\qquad\hbox{and}\qquad
\begin{array}{|c|c|c|c|c|}
\hline
0&0&0&0&0\\
\hline
0&0&0&0&\!\!\!\!{}^{{}^\bullet}\,1\\
\hline
0&0&\!\!\!\!{}^{{}^\bullet}\raisebox{.3pt}{\textcircled{\raisebox{-.9pt} {1}}}\!\!\!&1&2\\
\hline
0&0&1&\!\!\!\!{}^{{}^\bullet}\,2&3\\
\hline
0&\!\!\!\!{}^{{}^\bullet}\,1&2&3&4\\
\hline
\end{array}
$$
The Schubert varieties $\X_{w_1}$ and $\X_{w_2}$ 
are the only singular Schubert varieties for $n=4$.
\end{ex}

\begin{ex}
For the maximal cell ${\mathcal C}_{w_0}$, the rank matrix is given by:
$$
r_{ij}(w_0)=
\max\{0,\,i+j-n\}.
$$
\end{ex}

The smaller is the Schubert cell ${\mathcal C}_w$, the bigger are the numbers
 $r_{ij}(w)$.

\subsection{The pillar entries}\label{PilDeSect}

The rank matrix is completely determined
by a few particular entries.
This idea is due to Fulton~\cite{Ful} (see also~\cite{Woo,RWY} and references therein).
The following notion is crucial for us.

\begin{defi}
\label{PilDef}
An entry $r_{ij}$ of a rank matrix $r(w)$ is called
{\it pillar} if it satisfies the conditions
\begin{equation}
\label{LocPil}
\left\{
\begin{array}{l}
r_{ij}=r_{i-1,j}+1=r_{i,j-1}+1,\\[6pt]
r_{ij}=r_{i+1,j}=r_{i,j+1}.
\end{array}
\right.
\end{equation}
\end{defi}

\noindent
In other words,
the fragment of the rank matrix around a pillar
entry is as follows:
$$
\begin{array}{ccc}
&a-1&\\[2pt]
a-1&\raisebox{.3pt}{\textcircled{\raisebox{-.1pt} {$a$}}}&a\\[2pt]
&a&
\end{array}
$$
We always encircle the pillar entries, in order to distinguish them.

In combinatorial terms, pillar entries can be characterized as follows.
An entry $r_{ij}$ of a rank matrix $r(w)$ is pillar if and only if
\begin{equation}
\label{ComPil}
\left\{
\begin{array}{ll}
w(i)\leq{}j,&w(i+1)>j,\\[6pt]
w^{-1}(j)\leq{}i,&w^{-1}(j+1)>i.
\end{array}
\right.
\end{equation}
It is easy to see that these conditions are equivalent to~(\ref{LocPil}).

It worth noticing that, the more a given permutation $w$ is ``close'' to the identity,
the more pillar entries the matrix $r(w)$ has.
The matrix $r(\Id)$ has $n-1$ pillar entries $r_{ii}=i$, for $1\leq{}i\leq{}n-1$. 
The more $w$ is ``close'' to the longest element $w_0$,
the less pillar entries the matrix $r(w)$ has.
In particular, $r(w_0)$ is the only rank matrix with no pillar entries.

\begin{prop}
\label{PiP}
Every Schubert cell is completely determined by the pillar entries
of the rank matrix.
\end{prop}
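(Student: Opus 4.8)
The plan is to show that the pillar entries determine the entire rank matrix $r(w)$, since by the already-cited result of~\cite{Ful} (equivalently, by the discussion in Section~\ref{rkmat}) the rank matrix determines the Schubert cell. So the whole task is a combinatorial reconstruction statement about rank matrices. First I would fix the boundary data that every rank matrix carries for free: $r_{0k}=r_{k0}=0$ and $r_{kn}=r_{nk}=k$. These already pin down the first and last rows and columns, independently of $w$.

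Next I would argue that, knowing the pillar entries together with this boundary, one can fill in the rest by the monotonicity and submodularity constraints listed in Section~\ref{rkmat}, namely $r_{i,j+1}-r_{ij}\in\{0,1\}$, $r_{i+1,j}-r_{ij}\in\{0,1\}$, and $r_{ij}+r_{i+1,j+1}\ge r_{i+1,j}+r_{i,j+1}$. The cleanest way to make this precise is to use the combinatorial characterization~(\ref{ComPil}): a pillar entry at $(i,j)$ records simultaneously that $w(i)\le j<w(i+1)$ and $w^{-1}(j)\le i<w^{-1}(j+1)$. I would first check that from the set of pillar positions one can recover the permutation $w$ itself. Concretely, the condition $w(i)\le j<w(i+1)$ for a pillar at $(i,j)$ says that $j$ is a ``descent-free gap'' of $w$ just past position~$i$; reading off, for each~$i$, the largest~$j$ such that $(i,j)$ is weakly below a pillar in row~$i$ should let one reconstruct the step function $i\mapsto w(i)$. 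Once $w$ is known, $r(w)$ is recovered by formula~(\ref{Ful}), $r_{ij}=\#\{k\le i\mid w(k)\le j\}$, and we are done. Alternatively — and this is probably the slicker route that the authors intend — one avoids reconstructing $w$ and instead shows directly that the value $r_{ij}$ at an arbitrary cell equals the number of pillar entries $(p,q)$ with $p\le i$ and $q\le j$, plus a boundary correction; one proves this by induction on $i+j$ using the local shape of $r$ around non-pillar cells, where by definition at least one of the two ``staircase'' equalities in~(\ref{LocPil}) fails, forcing $r_{ij}$ to be determined by its already-known neighbors.

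The main obstacle I anticipate is the induction step at non-pillar cells: one must verify that the local constraints genuinely force a unique value there, i.e. that a non-pillar cell can never be a place where the matrix is ``free to choose.'' This needs a short case analysis on which of the four local equalities in~(\ref{LocPil}) fail, using submodularity to rule out the ambiguous configurations (for instance, showing that $r_{i-1,j}=r_{ij}$ and $r_{i,j-1}=r_{ij}$ cannot both hold unless $r_{i-1,j-1}=r_{ij}$ as well, by submodularity, so the value propagates from the upper-left). The bookkeeping is routine but must be done carefully to cover the cells adjacent to the boundary separately. I expect the cleanest writeup to combine this with the explicit reconstruction algorithm promised in Section~\ref{CombSect}, so that Proposition~\ref{PiP} becomes essentially a corollary of that algorithm's correctness.
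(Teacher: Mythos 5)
Your overall strategy (pillar entries $\Rightarrow$ rank matrix $\Rightarrow$ cell) is the natural one, and its first line matches how the paper opens its proof, but the key step is not carried out and, as sketched, it does not work. The induction on $i+j$ with ``local forcing at non-pillar cells'' has a genuine gap: whether $(i,j)$ is a pillar is decided by the entries \emph{below and to the right} of $(i,j)$ as well (the second line of~(\ref{LocPil})), and those are unknown at the time you must fill in $r_{ij}$. Concretely, if $r_{i-1,j-1}=r_{i-1,j}=r_{i,j-1}=a$, then both $r_{ij}=a$ and $r_{ij}=a+1$ are compatible with monotonicity, submodularity, and with $(i,j)$ not being a pillar (for the value $a+1$ one only needs some later entry to the right or below to increase), so the value is not determined by the already-known neighbours; uniqueness is a global statement, not a local one. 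Your alternative route, reading $w(i)$ row by row from the pillars via~(\ref{ComPil}), also fails: most rows contain no pillar at all (a Coxeter element has only $n-2$ pillar entries, cf.\ Proposition~\ref{ExCoxSec}), so there is nothing to read off in a given row. Finally, deferring to the reconstruction algorithm of Section~\ref{wfrompil} is essentially circular here, since the uniqueness assertion behind that algorithm is exactly the content of Proposition~\ref{PiP}; the paper proves the proposition independently of the algorithm.

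For comparison, the paper's argument (Section~\ref{ProProS}) avoids reconstructing the matrix entry by entry. It works with the rank conditions $C_{ij}:\ \rank(M_{ij})\le i-r_{ij}(w)$ cutting out $\X_w$, observes that each unit step of the matrix yields an implication between neighbouring conditions (e.g.\ if $r_{i+1,j}=r_{ij}$ then $C_{ij}$ implies $C_{i+1,j}$, while if $r_{i+1,j}=r_{ij}+1$ then $C_{i+1,j}$ implies $C_{ij}$), and then traces, from any cell, a path against these implication arrows; since the entry is non-decreasing along such a path, the path cannot loop and must terminate at a pillar or at one of the two trivial corner cells. Hence every condition follows from the pillar conditions, so the variety, and therefore the cell, is determined by the pillar data. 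If you want to salvage your approach, you would need a genuinely global uniqueness argument of this kind (or a full correctness proof of the reconstruction algorithm), not a cell-by-cell propagation.
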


This statement is classical.
For the sake of completeness,
a proof will be presented in Section~\ref{ProProS}.
An explicit algorithm that reconstructs the permutation $w$ from the
pillar entries of the rank matrix~$r(w)$ will be presented
in Section~\ref{wfrompil}.

Let us describe the pillar entries of the rank matrices
corresponding to the Coxeter elements.

\begin{prop}
\label{ExCoxSec}
The rank matrix of any Coxeter element of $S_n$
has $n-2$ pillar entries 
$$r_{i,i+1}=i,
\qquad\hbox{or}\qquad
r_{i+1,i}=i,
$$
for each $i\in\{1,2,\ldots,n-2\}$.
\end{prop}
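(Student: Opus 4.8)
The plan is to use the combinatorial characterization~(\ref{ComPil}) of pillar entries together with the explicit structure of Coxeter elements. First I would recall that a Coxeter element $w = s_{i_1}\cdots s_{i_{n-1}}$ uses each elementary transposition $s_i$ exactly once; it is a standard fact that such $w$ is a single $n$-cycle, and more precisely that for each $i \in \{1,\ldots,n-1\}$ the relative order in which $s_i$ appears determines whether $w$ moves things ``up'' or ``down'' across the barrier between $\{1,\ldots,i\}$ and $\{i+1,\ldots,n\}$. The key structural observation I would isolate as a lemma: for a Coxeter element $w$ and each $i \in \{1,\ldots,n-1\}$, exactly one value among $\{1,\ldots,i\}$ is sent by $w$ into $\{i+1,\ldots,n\}$ and, dually, exactly one value of $\{i+1,\ldots,n\}$ is sent into $\{1,\ldots,i\}$ (because $w$ permutes the whole set and restricts ``almost'' to a permutation of each block, the defect being exactly one on each side). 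Equivalently, $r_{i, i}(w) = i - 1$ for every $1 \le i \le n-1$, whereas $r_{ii}(\mathrm{Id}) = i$; this single ``deficit'' is the combinatorial signature of a Coxeter element.

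Next I would examine, for each $i \in \{1, \ldots, n-2\}$, the $2\times 2$ block of $r(w)$ with corner indices in $\{i, i+1\}$, and show that either $r_{i,i+1}$ or $r_{i+1,i}$ is a pillar entry with value $i$, but not both. Using $r_{i,i}=i-1$, $r_{i+1,i+1}=i$ (the latter because the deficit at level $i+1$ concerns blocks of size $i+1$, and one checks $r_{i+1,i+1} = i$ for $1\le i \le n-2$, with $r_{n,n}=n$ trivially), and the staircase conditions $r_{i,j+1}-r_{ij}\in\{0,1\}$, the only possibilities for the pair $(r_{i,i+1}, r_{i+1,i})$ are $(i, i-1)$ or $(i-1, i)$. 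In the first case I verify that $r_{i,i+1}$ satisfies~(\ref{LocPil}): the neighbours to the left and above are $r_{i,i}=i-1$ and $r_{i-1,i+1}$, which must be $i-1$ since $r_{i-1,i-1}=i-2$ forces $r_{i-1,i+1}\le i-1$ and it is $\ge r_{i-1,i}\ge i-1$; the neighbours to the right and below, $r_{i,i+2}$ and $r_{i+1,i+1}$, both equal $i$. Symmetrically, in the second case $r_{i+1,i}$ is the pillar. This gives exactly $n-2$ pillar entries of the stated form (one per value $i=1,\dots,n-2$), and conversely no pillar entry can have index pair outside the near-diagonal band because the row and column values of $r(w)$ increase by at most one and stay pinned to the ``$i-1$ on the diagonal'' constraint, leaving no room for a pillar fragment off this band — I would spell this out by noting that a pillar at $(i,j)$ forces $r_{i,j}=r_{i,i}+1$ type relations only when $|i-j|=1$ given the deficit-one structure.

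The step I expect to be the main obstacle is the bookkeeping that turns ``each $s_i$ appears exactly once'' into the precise statement that $r_{i,i}(w) = i-1$ for all $1 \le i \le n-1$ and that the off-diagonal neighbour values behave as claimed; the cleanest route is probably an induction on $n$ (or on the length of the reduced word), peeling off the first or last transposition $s_{i_1}$ and tracking how the rank matrix changes, but one must be careful that removing a letter from a Coxeter word of $S_n$ does not directly give a Coxeter word of $S_{n-1}$ unless one also relabels. An alternative, perhaps smoother, is to use the known bijection between Coxeter elements of $S_n$ and sequences of signs (for each $i$, whether $s_i$ precedes or follows the letters adjacent to it), and then simply write down $w$ explicitly as a function of that sign sequence, from which the rank matrix entries $r_{i,i+1}$ versus $r_{i+1,i}$ can be read off directly. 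Once the diagonal and near-diagonal values are pinned down, the verification of~(\ref{LocPil}) at the claimed positions and its failure everywhere else is routine from the staircase inequalities listed in Section~\ref{rkmat}.
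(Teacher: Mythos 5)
Your overall route differs from the paper's (which reads off directly from the relative order of $s_i$ and $s_{i+1}$ in the Coxeter word, via the combinatorial criterion~(\ref{ComPil}), which of $r_{i,i+1}$, $r_{i+1,i}$ is pillar), and your route through the diagonal deficit $r_{ii}(w)=i-1$ is in principle workable; the reduction of that deficit statement to ``each $s_i$ occurs once bounds the crossing number at level $i$, and the $n$-cycle property forces it to be exactly $1$'' is a sound plan. However, the local verification you build on top of it contains a false step and two under-justified ones. First, the claim that the staircase conditions together with $r_{ii}=i-1$, $r_{i+1,i+1}=i$ force $(r_{i,i+1},r_{i+1,i})$ to be $(i,i-1)$ or $(i-1,i)$ is not correct: the pair $(i-1,i-1)$ is also compatible with monotonicity and submodularity, and is excluded only because it would force $w(i+1)=i+1$, impossible since a Coxeter element is an $n$-cycle and hence fixed-point free; you have that fact available, but it must be invoked, the rank inequalities alone do not do the job. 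Second, and more seriously, your verification of the ``above'' neighbour is wrong: you assert $r_{i-1,i+1}\ge r_{i-1,i}\ge i-1$, but $r_{i-1,i}\ge i-1$ fails for Coxeter elements in general. For $w=3142\in S_4$ (the third matrix in Example~\ref{Exn4}\,b)) and $i=2$ one has $r_{1,2}=0$ while $i-1=1$, even though the conclusion $r_{1,3}=1$ is true. Likewise ``$r_{i-1,i-1}=i-2$ forces $r_{i-1,i+1}\le i-1$'' does not follow from row monotonicity (it only gives $\le i$); the upper bound holds for the trivial reason $r_{i-1,j}\le i-1$. The correct argument needs the deficit structure itself: in the case $r_{i,i+1}=i$, the unique $k\le i$ with $w(k)>i$ is $w^{-1}(i+1)$ and its value is exactly $i+1$, so every $k\le i-1$ satisfies $w(k)\le i+1$, whence $r_{i-1,i+1}=i-1$, as required by~(\ref{LocPil}).

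Third, the exclusion of pillar entries outside the band $|i-j|\le 1$ is only asserted (``leaving no room \dots''); the staircase inequalities by themselves do not rule such pillars out, and again one must use the unique-crossing property. A correct version runs: if $(i,j)$ with $j\ge i+2$ were pillar, then by~(\ref{ComPil}) $w^{-1}(j)\le i\le j-2$, so at level $j-1$ the unique position $k\le j-1$ with $w(k)>j-1$ is $w^{-1}(j)$; since $i+1\le j-1$ and $i+1\ne w^{-1}(j)$, this gives $w(i+1)\le j-1$, contradicting $w(i+1)>j$. The case $i\ge j+2$ follows by transposition, since $w^{-1}$ is again a Coxeter element, and pillars on the diagonal are excluded by the fixed-point argument above. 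With these repairs (and a complete proof of the deficit lemma, which you rightly flag as the main outstanding step) your argument closes, but as written the chain of inequalities it relies on is not valid.
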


\begin{proof}
Consider a Coxeter element
$w=\cdots\,s_i\,\cdots\,s_{i+1}\,\cdots{}$.
It can be deduced directly from~(\ref{ComPil}), that the entry
$r_{i,i+1}$ of $r(w)$ is, indeed, a pillar entry.
Similarly, for a Coxeter element of the form
$w=\cdots\,s_{i+1}\,\cdots\,s_{i}\,\cdots{}$,
one has that the entry $r_{i+1,i}$ is pillar.
Similar arguments show that the rank matrix of a Coxeter element
cannot have other pillar entries than the above $n-2$ ones.

Finally, the fact that the value of the pillar entry $r_{i,i+1}$ (or $r_{i+1,i}$) is equal to $i$
follows from~(\ref{Ful}).
\end{proof}

\begin{rem}
In other words, the rank matrix of every Coxeter element of $S_n$
is determined by a sequence of $n-2$ inclusions:
$$
V_i\subset{}\C^{i+1},
\qquad\hbox{or}\qquad
\C^i\subset{}V_{i+1},
$$
for $i\in\{1,\ldots,n-2\}$.
The $2^{n-2}$ Coxeter elements correspond to an arbitrary
choice of one of the above inclusions for every $i$.
\end{rem}

We believe that the notion of pillar entry deserve a further study.
In particular, the number of pillar entries for a given permutation
is an interesting characteristic.
Some of the basic properties of pillar entries 
will be presented in Section~\ref{CombSect}.

\subsection{Fulton's essential entries}\label{CompSec}

Let us recall here Fulton's notion of essential entry.
An entry $r_{ij}$ of a rank matrix $r(w)$ is called {\it essential},
see~\cite{Ful1} and also~\cite{EL}, if
\begin{equation}
\label{ComEss}
\left\{
\begin{array}{ll}
w(i)>j,&w(i+1)\leq{}j,\\[6pt]
w^{-1}(j)>i,&w^{-1}(j+1)\leq{}i.
\end{array}
\right.
\end{equation}
Equivalently, the rank matrix around an essential
entry is as follows:
$$
\begin{array}{ccc}
&a&\\[2pt]
a&\raisebox{.3pt}{\boxed{\raisebox{-.1pt} {$a$}}}&a+1\\[2pt]
&a+1&
\end{array}
$$
It is proved in~\cite{Ful1} that
every rank matrix (and therefore the corresponding Schubert variety)
is completely characterized by its essential set.

The notions of essential and pillar entries are somewhat
``complementary'', as the inequality signs in formulas~(\ref{ComPil}) and (\ref{ComEss}) 
are reversed, cf. Appendix for a comparison.

\subsection{Transposed pillars: the main conjecture}\label{CoSec}

The following conjecture asserts that if two Schubert varieties
have the same tangent cones, then they have the same number of pillars,
whose values are also the same, and whose position in the respective rank matrices
can only differ by transposition.

\begin{conj}
\label{TheConjThm}
Given two permutations, $w$ and $w'\in{}S_n$, if
$\T_w=\T_{w'}$ then the rank matrices~$r(w)$ and~$r(w')$
have the same number of pillar entries, and
for every pillar entry~$r_{ij}$ of~$r(w)$,
one has the following alternative:

a) the entry $r'_{ij}$ of~$r(w')$ is pillar and $r'_{ij}=r_{ij}$, or

b) the entry $r'_{ji}$ of~$r(w')$ is pillar and $r'_{ji}=r_{ij}$.
\end{conj}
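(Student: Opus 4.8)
\textbf{Proof strategy for Conjecture~\ref{TheConjThm}.}
The plan is to extract combinatorial data from a tangent cone by looking at its behaviour in ``coordinate planes.'' Recall that $\T_w\subset\gn_-$ is cut out by equations, and that the pillar entries of $r(w)$ encode a minimal list of rank conditions $\dim(V_i\cap\C^j)\ge r_{ij}$ defining $\X_w$. The first step is to understand, for a single pillar $r_{ij}$, precisely which monomials in the coordinates $x_{k\ell}$ ($k>\ell$) can appear in the corresponding (lowest-degree part of the) defining equation of $\T_w$: the minor of the matrix of the $v$'s expressing $\dim(V_i\cap\C^j)\le r_{ij}$ has a distinguished lowest-order term, and I would show that this lowest term is governed by $(i,j)$ alone, being (up to sign) a product running ``between'' rows $\le i$ and columns $>j$. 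Transposition $i\leftrightarrow j$ corresponds exactly to transposing $\gn_-$ via $x_{k\ell}\mapsto x_{\ell k}$ on this leading term, which is why the alternative (a)/(b) is the natural dichotomy.

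The second step is to recover the multiset of pillar values $\{r_{ij}\}$ together with the unordered pairs $\{i,j\}$ \emph{intrinsically} from $\T_w$, independently of any permutation. Here I would use the grading and the $B$-action: $\T_w$ is $B$-stable, its irreducible components are closures of $B$-orbits' unions, and the generic behaviour of $\T_w$ along the coordinate subspace $\{x_{k\ell}=0 : \ell<k,\ (k,\ell)\notin\text{a chosen "window"}\}$ detects exactly one rank condition at a time. Concretely, for each pair $a<b$ with $1\le a,b\le n-1$, restrict $\T_w$ to the subvariety where all $x_{k\ell}$ vanish except those with $k$ ``above'' and $\ell$ ``below'' the relevant antidiagonal through $(a,b)$; the codimension (and the precise equation) of that restriction is a function of $\T_w$ only, and by Step~1 it equals a function of whichever of $r_{ab}$, $r_{ba}$ is a pillar (and of its value). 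Since this holds for every pair, $\T_w=\T_{w'}$ forces, pair by pair, that $r(w')$ has a pillar at $(a,b)$ or $(b,a)$ with the same value whenever $r(w)$ does, and conversely; counting pillars then matches automatically.

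The third step is bookkeeping: assemble the pair-by-pair statements into the one-to-one correspondence in the conjecture, checking that ``pillar at $(i,j)$ for $w$'' maps to ``pillar at $(i,j)$ or $(j,i)$ for $w'$'' with equal value, and use Proposition~\ref{PiP} only implicitly (we do not need to reconstruct $w'$, only to read off its pillars). A convenient reformulation I would use throughout is the combinatorial characterization~(\ref{ComPil}): a pillar at $(i,j)$ means $w(i)\le j<w(i+1)$ and $w^{-1}(j)\le i<w^{-1}(j+1)$, so that the associated rank condition on $\X_w$ involves exactly rows $1,\dots,i$ and columns $1,\dots,j$, and the ``complementary'' coordinates $x_{k\ell}$ with $k>i$, $\ell\le j$ or $k\le i$, $\ell>j$ are the ones entering the leading equation.

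\textbf{Main obstacle.} The delicate point is Step~1 and the first half of Step~2 together: showing that the lowest-degree part of the minor defining a given rank condition is a \emph{single} monomial (or at least an irreducible form) whose variable set is determined by $(i,j)$ and is stable under $i\leftrightarrow j$ under coordinate transposition, and that distinct pillars contribute ``independent'' equations so that restricting to the right coordinate window isolates exactly one of them. Pillars can be ``linked'' (as the introduction warns), which means the restriction to a coordinate window may see a \emph{product} of several pillar equations rather than one; disentangling the contribution of each pillar from such a product — i.e.\ proving the multiset of pillar data is genuinely an invariant of $\T_w$ and not merely of $\T_w$ together with extra choices — is where the real work lies, and is presumably why this remains a conjecture rather than a theorem.
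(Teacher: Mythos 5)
The statement you are trying to prove is stated in the paper as Conjecture~\ref{TheConjThm} and is left open there: the authors offer only supporting evidence (Example~\ref{InvCor}, Proposition~\ref{ExCoxSec}, and computations for small $n$), so there is no proof in the paper to compare your argument with, and your text does not close the gap either. What you have written is a programme, and its decisive step is exactly the assertion of the conjecture in disguise. In Step~2 you posit that, for each pair $(a,b)$, the restriction of $\T_w$ to a suitable coordinate ``window'' is a function of whichever of $r_{ab}$, $r_{ba}$ is a pillar and of its value; nothing in Steps~1--3 establishes this, and you yourself concede in the ``main obstacle'' paragraph that linked pillars make the window see a mixture of several pillar equations. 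Since the conjecture is precisely the claim that the unordered pillar data is an invariant of $\T_w$, a proof must supply that invariance argument; asserting that the window-restriction ``is a function of $\T_w$ only'' (true, trivially) and ``equals a function of the pillar at $(a,b)$ or $(b,a)$'' (the whole difficulty) does not.

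Two technical points in Step~1 would also fail as stated. First, the lowest-degree part of the minors coming from a rank condition $\rank(M_{ij})\le i-r_{ij}$ is not a single monomial: already for $w=4231$ the tangent cone is the irreducible quadric $x_{31}x_{42}-x_{32}x_{41}=0$, and for $i\le j$ the matrix $M_{ij}$ has generic entries, so its minors are irreducible determinantal forms, not monomials. Second, and more seriously, the ideal of the tangent cone is the ideal of \emph{all} leading forms of the ideal of $\X_w$, which need not be generated by the leading forms of the particular minors attached to the pillar conditions; so an equation-by-equation, pillar-by-pillar reading of ``the leading term of the pillar equation'' is not a legitimate description of $\T_w$ without an additional argument (the paper itself only gets away with a version of this in the one direction it proves, Theorem~\ref{TheThm}, via the duality of Section~5.5 and the inversion trick, not in the direction of the conjecture). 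Until you can show that the multiset of pillar values together with the unordered position pairs $\{i,j\}$ can be recovered from the cone $\T_w$ as a subvariety of $\gn_-$ --- including in the presence of linked pillars, where the paper's own examples (e.g.\ the non-admissible transposition $6745321\leftrightarrow 6753421$, and the dimension change in Example~\ref{ExLinkDva}) show the interaction is subtle --- the argument remains a plan rather than a proof.
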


Example~\ref{InvCor} and Proposition~\ref{ExCoxSec} are the first
examples that confirm our conjecture.
We will give many other examples in the sequel.

\subsection{Restrictions: forbidden transpositions}\label{InvCoSec}

Note that the inverse of Conjecture~\ref{TheConjThm} is false:
two permutations with partially transposed pillar entries
do not necessarily correspond to the same tangent cones.

\begin{ex}
\label{ExLinkDva}
The simplest counterexample to the converse statement that we know is
provided by the following permutations in~$S_6$:
$w=456321$ and $w'=546132$.
Indeed, the corresponding rank matrices are:

\begin{figure}[H]
\begin{center}
\includegraphics[height=3.5cm]{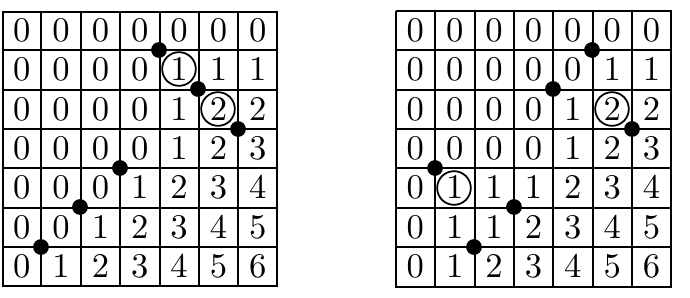}
\end{center}
\caption{Permutations of different length with transposed pillars.}
\label{linkedpilNT}
\end{figure}

\noindent
respectively.
The pillar entries are (partially) transposed, but the permutations
have different length: $\ell(w)=12$ and $\ell(w')=11$,
so that their tangent cones have different dimensions,
and cannot coincide.

Note however the following interesting inclusion: $\T_{w'}\subset\T_w$.
\end{ex}

Another restriction for partial transposition of pillars occurs more often than the above
discussed one.
Given a permutation $w$ and the corresponding rank matrix $r(w)$, 
then a partial transposition of the pillar entries may not correspond to any
rank matrix of any permutation.

\begin{ex}
\label{ExLink}
Consider the permutation $w=34521$ in $S_5$.
The corresponding rank matrix is as follows:
$$
\begin{array}{|c|c|c|c|c|c|}
\hline
0&0&0&0&0&0\\
\hline
0&0&0&\!\!\!\!{}^{{}^\bullet}\raisebox{.1pt}{\textcircled{\raisebox{-.5pt} {$1$}}}\!\!\!&1&1\\
\hline
0&0&0&1&\!\!\!\!{}^{{}^\bullet}\raisebox{.1pt}{\textcircled{\raisebox{-.5pt} {$2$}}}\!\!\!&2\\
\hline
0&0&0&1&2&\!\!\!\!{}^{{}^\bullet}\,3\\
\hline
0&0&\!\!\!\!{}^{{}^\bullet}\,1&2&3&4\\
\hline
0&\!\!\!\!{}^{{}^\bullet}\,1&2&3&4&5\\
\hline
\end{array}
$$
It turns out that there are no rank matrices
with the following pillar entries:
$$
a)\quad
\begin{array}{|c|c|c|c|c|c|}
\hline
0&0&0&0&0&0\\
\hline
0&&&\!\!\raisebox{.1pt}{\textcircled{\raisebox{-.5pt} {$1$}}}\!\!\!&&1\\
\hline
0&&&&&2\\
\hline
0&&&&&3\\
\hline
0&&\!\!\raisebox{.1pt}{\textcircled{\raisebox{-.5pt} {$2$}}}\!\!\!&&&4\\
\hline
0&1&2&3&4&5\\
\hline
\end{array}
\qquad\hbox{b)}\quad
\begin{array}{|c|c|c|c|c|c|}
\hline
0&0&0&0&0&0\\
\hline
0&&&&&1\\
\hline
0&&&&\!\!\raisebox{.1pt}{\textcircled{\raisebox{-.5pt} {$2$}}}\!\!\!&2\\
\hline
0&\!\!\raisebox{.1pt}{\textcircled{\raisebox{-.5pt} {$1$}}}\!\!\!&&&&3\\
\hline
0&&&&&4\\
\hline
0&1&2&3&4&5\\
\hline
\end{array}
$$
Indeed, the above positions of pillar entries are impossible,
since they contradict formula~(\ref{ComPil}), see also Section~\ref{PosPilSec}
for more details.
\end{ex}

\subsection{More on partial transpositions}
Let us briefly discuss the partial transpositions of linked pillar entries.
If one transpose some pillar entries of a rank matrix $r(w)$, but not all of them,  
then the following three possibilities may occur:

1)  there exists a rank matrix of a permutation $w'$
that does have the given set of pillar entries, 
but of different length (cf. Example~\ref{ExLinkDva});

2) there is no rank matrix of a permutation
that has this set of pillar entries
(cf. Example~\ref{ExLink});

3) the ``good case'' where the resulting matrix is a rank matrix of a permutation
that has the given set of pillar entries and the same tangent cone as $w$.

\bigbreak

In view of Conjecture~\ref{TheConjThm} and the above discussion,
the main goal of this paper is to investigate which (partial) transpositions
of pillar entries of a rank matrix $r(w)$ lead to a new permutation $w'$
and do not change the tangent cone.

\section{Admissible partial transpositions: the main theorem}\label{FMTSec}

In this section we describe classes of permutations in $S_n$
with identical tangent cones.
Given a permutation $w$, we define a series of
operations called ``admissible partial transpositions'' and an equivalence class in $S_n$
that consist of permutations related by such transpositions.
We formulate our main result that all permutations from such a class correspond to the same tangent cone.

However, the described classes are not maximal.
Examples in the end of the section show that there are more permutations
with identical tangent cones.

\subsection{Linked and dissociated pillar entries}
We define an equivalence relation on the set of pillar entries of a rank matrix.
Roughly speaking, two pillar entries are in the same class if they are ``close enough'' to each other.

\begin{defi}\label{linked}
(i)
Given a permutation $w\in{}S_n$,
and let $r_{ij}$ and $r_{i'j'}$ be two pillar entries in the rank matrix $r_{ij}(w)$. 
These pillar entries are called \textit{related} if the intervals:
$$
\left[\min{(i,j)},\,\max{(i,j)}\right]
\qquad\hbox{and}\qquad
\left[\min{(i',j')},\,\max{(i',j')}\right]
$$
have a common (real) interior point. 
\begin{figure}[H]
\begin{center}
\includegraphics[height=3.5cm]{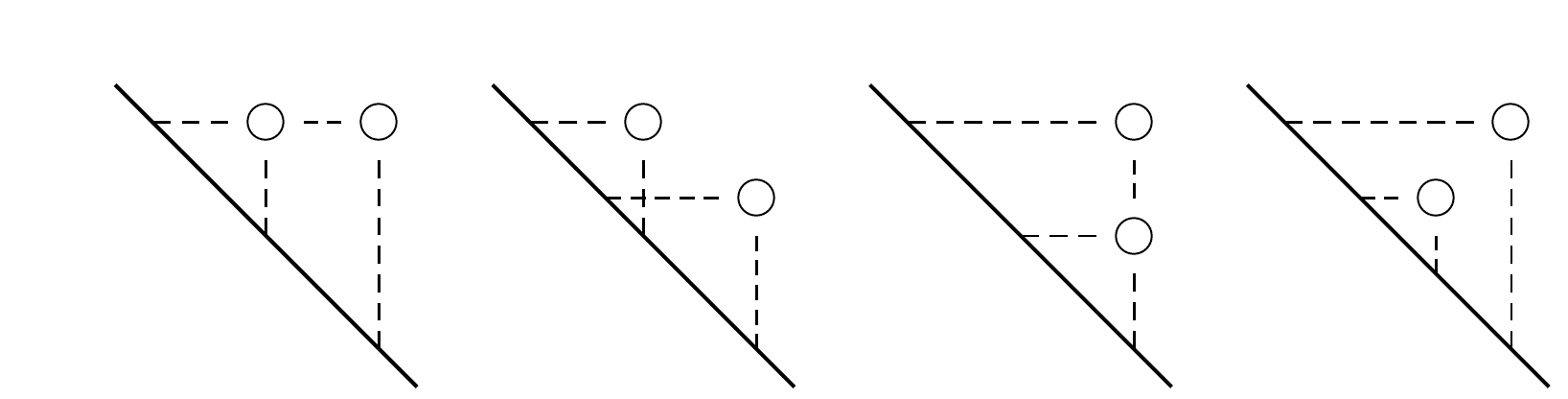}
\end{center}
\caption{Configurations for two related pillars.}
\label{linkedpil}
\end{figure}

(ii)
Pillar entries are called \textit{linked} if they
can be connected by chain of related entries.

(iii)
Otherwise the pillar entries are called \textit{dissociated}.
\begin{figure}[H]
\begin{center}
\includegraphics[height=3.5cm]{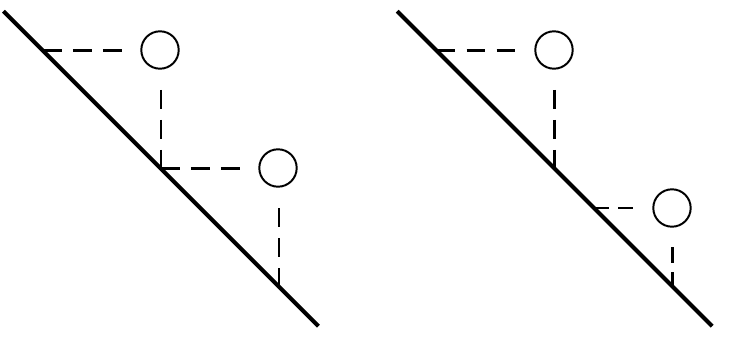}
\end{center}
\caption{Configurations for two dissociated pillars.}
\label{dissocpil}
\end{figure}
\end{defi}

\begin{ex}\label{expillar}
The following rank matrix (in which we omit the extremal rows and columns) corresponding to 
the permutation $w=12,2,9,7,6,4,10,5,3,11,1,8\in S_{12}$:
\begin{figure}[H]
\begin{center}
\includegraphics[height=6cm]{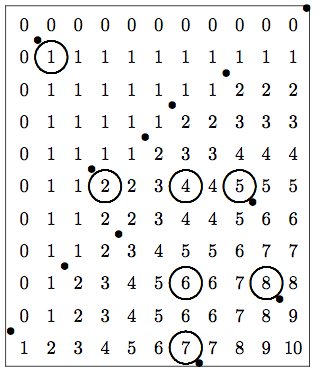}
\end{center}
\caption{Permutation with three classes of linked pillars.}
\label{ExinS12}
\end{figure}

\noindent
 have three classes of linked pillar entries:
$$
\{r_{22}=1\}, \;\{r_{64}=2\},\; \{r_{67}=4, \,r_{69}=5, \,r_{97}=6, \,r_{9,10}=8, \,r_{11,7}=7\}.
$$
\end{ex}

\subsection{The linking graph of pillar entries}

It is convenient to display the linking relations between the pillar entries using a graph. 

\begin{defi}
\label{GraphDef}
The {\it linking graph} is defined as follows.
\begin{enumerate}
\item
The set of vertices of the linking graph is the set of pillar entries of the rank matrix;
\item
two vertices are connected by an edge whenever the corresponding pillar entries are 
related, cf. item (i) of Definition \ref{linked}. 
\end{enumerate}
\end{defi}

For instance, Example~\ref{expillar} corresponds to the following graph

\begin{figure}[H]
\begin{center}
\includegraphics[height=1.5cm]{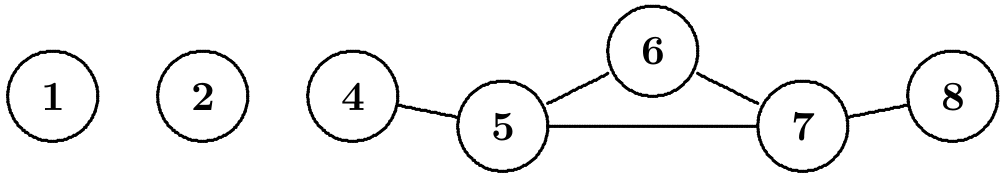}
\end{center}
\caption{Graph of related pillars.}
\label{dissocpil1}
\end{figure}
\noindent
where we have labeled the vertices by the values of the pillar entries (omitting the positions as the values are all different).

The connected components of the linking graph correspond to the classes
of linked pillar entries.

\subsection{Admissible partial transpositions}\label{admpSect}
An {\it admissible partial transposition} is an operation defined
on rank matrices and on the group $S_n$.
Roughly speaking, it consists in transposition of a part of the pillar entries,
such that linked pillar entries transpose (or not) simultaneously.
More precisely, we have the following:

\begin{defi}
\label{ParDef}
Two rank matrices, $r(w)$ and $r(w')$,
are \textit{admissibly partially transpose} to each other if there exists a set $\Lc$
which is a union of classes of linked pillar entries for $r(w)$,
such that the set of pillar entries of $r(w')$ is as follows
\begin{equation}
\label{ParTrEq}
\left\{
\begin{array}{rcllll}
r_{ij}'&=&r_{ij},& & \text{whenever} &r_{ij} \not\in \Lc,\\[4pt]
r_{ji}'&=&r_{ij},& & \text{whenever} &r_{ij}  \in \Lc.\\
\end{array}
\right.
\end{equation}
\end{defi}

\begin{ex}
The permutation $w=11,2,9,8,6,4,5,12,3,7,10,1$ in $S_{12}$ corresponding to the rank matrix 
\begin{figure}[H]
\begin{center}
\includegraphics[height=6cm]{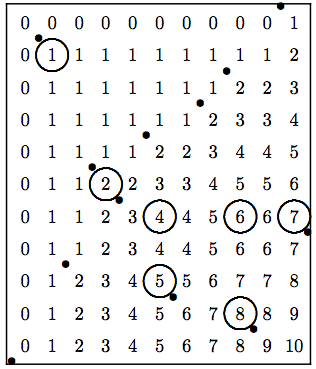}
\end{center}
\caption{An example of admissible partial transposition.}
\label{ExAdmTr}
\end{figure}

\noindent
is admissibly partially transpose to the permutation given in Example \ref{expillar}.
Indeed, the set of pillar entries is the same except for the last 
connected component of the graph, 
for which the positions of the pillar entries are transposed.
\end{ex}

\subsection{Statement of the main theorem}\label{MThSec}

In this section we formulate a sufficient condition 
for the tangent cones of two Schubert varieties to coincide.
Furthermore,
it turns out that every partial transposition of the pillar entries in the associated rank matrices
defines an operation on the group $S_n$.

Our main result is the following

\begin{thm}\label{TheThm}
(i)
Given a permutation $w\in{}S_n$ and the corresponding rank matrix
$r(w)$, for every admissible partial transposition, $r(w)'$, of $r(w)$
there exists a permutation $w'\in{}S_n$ such that $r(w)'=r(w')$.

(ii)
If $w$ and $w'$ are admissibly partially transpose to each other, then
corresponding Shubert varieties have same tangent cones: $\T_{w}=\T_{w'}$.
\end{thm}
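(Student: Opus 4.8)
\textbf{Proof plan for Theorem~\ref{TheThm}.}

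The plan is to decouple the two statements and tackle (i) combinatorially and (ii) geometrically, reducing both to the case of transposing a \emph{single} class of linked pillar entries, since an admissible partial transposition is by definition a composition of such elementary moves over the classes in $\Lc$. For part (i), I would start from the combinatorial characterization~(\ref{ComPil}) of pillar entries together with the list of admissibility constraints on rank matrices from Section~\ref{rkmat} (the inequalities $r_{ij}+r_{i+1,j+1}\ge r_{i+1,j}+r_{i,j+1}$ and the step-by-one conditions). The key point is that a class $\Lc$ of linked pillar entries occupies, by the ``common interior point'' condition in Definition~\ref{linked}, a contiguous band of indices, and transposing all of $\Lc$ across the diagonal amounts to reflecting the corresponding sub-staircase of the permutation diagram. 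First I would show that outside the index band spanned by $\Lc$ the rank matrix is literally unchanged, hence all the admissibility inequalities there hold trivially; then I would verify that the reflected band still satisfies~(\ref{ComPil}) at each of its pillar positions, using the fact that the pillars of a linked class, being mutually related, constrain each other exactly as their reflections do. This is where Section~\ref{PosPilSec} (``positions of pillars'') and the reconstruction algorithm of Section~\ref{wfrompil} do the real work: they guarantee that a set of entries satisfying~(\ref{ComPil}) pairwise-consistently is the pillar set of a unique permutation, which is the desired $w'$. The main obstacle in (i) is the bookkeeping showing that reflecting \emph{only} the linked class $\Lc$ (and leaving dissociated pillars fixed) does not create a forbidden configuration at the \emph{boundary} between $\Lc$'s band and the rest — this is precisely the content of the ``dissociated'' hypothesis, and I expect it to require a careful case analysis of how a dissociated pillar can sit next to the band.

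For part (ii), I would exploit the standard local description of $\X_w$ near $F_0$: in the coordinates $x_{ij}$ ($i>j$) on $\gn_-\simeq T_{F_0}\F$, the Schubert variety is cut out (set-theoretically) by the vanishing of certain minors of the ``generic triangular'' matrix with those entries, namely the minors whose size is forced by the pillar (equivalently essential) entries $r_{ij}$; see the rank-condition description right after~(\ref{Ful}). The tangent cone $\T_w$ is then the variety defined by the initial (lowest-degree) forms of these minor equations at the origin. The strategy is: transposing the pillar class $\Lc$ across the diagonal corresponds, on the level of these defining minors, to replacing a block of the generic matrix by its transpose; and transposition of a matrix does not change the ideal generated by its minors of a fixed size (since $\det M=\det M^{\mathsf T}$), nor — and this is the crucial refinement — does it change the ideal generated by their \emph{initial forms}, provided the relevant block is ``in general position'' with respect to the rest, which is exactly what linkedness/dissociatedness encodes. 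So I would prove that the pillar conditions split the defining minors into groups, each group involving a sub-block of variables associated to one linked class, with the blocks of distinct classes sharing no variables that matter at the level of initial forms; then within each group the transpose move is an ideal-preserving symmetry.

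Concretely the key steps are: (1) reduce to one linked class $\Lc$; (2) identify the subset of coordinates $x_{ij}$ and the finite set of minor equations attached to $\Lc$, and show the remaining equations (from dissociated pillars) involve a complementary, non-interacting set of variables; (3) show that the ideal of initial forms of the $\Lc$-equations is invariant under transposing the associated block, using $\det M=\det M^{\mathsf T}$ and a degree/leading-term argument; (4) conclude $\T_w=\T_{w'}$ by combining the invariant piece with the unchanged complementary piece. The main obstacle here is step~(3): one must control the \emph{initial forms} of the minors, not just the minors themselves, and argue that no lower-order cancellation is introduced or destroyed by the transposition; I expect this to hinge on showing that for a linked class the relevant minors of the generic block already have ``expected'' initial degree and that the combinatorics of linkedness guarantees the block is, up to the $B$-action, a generic triangular block whose transpose is $B$-equivalent to it. An alternative, possibly cleaner, route for (ii) is to use the $B$-equivariance: $\T_w$ is a union of $B$-orbit closures, transposition swaps a pillar sub-block with its transpose, and one can try to exhibit an explicit element of the ambient group (built from the longest element of a parabolic, acting block-wise) that conjugates the one situation to the other while fixing $F_0$ and preserving the complementary variables; if such an intertwiner exists it gives $\T_w=\T_{w'}$ immediately, and I would attempt this in parallel with the minor-ideal argument.
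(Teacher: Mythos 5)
Your plan for part (i) is essentially the paper's: reduce to transposing whole linked classes (which occupy pairwise non-overlapping diagonal bands) and invoke the reconstruction algorithm of Section~\ref{wfrompil}; the paper makes this concrete with an explicit three-step algorithm for the elementary partial transpositions $\trp_t$ in Section~\ref{ElPTrSec}, but your outline is compatible with that. The real problem is in part (ii), where your central step (3) rests on a misidentification. Transposing a pillar from position $(i,j)$ to $(j,i)$ does \emph{not} replace a block of the generic unitriangular matrix by its transpose: the rank conditions attached to $(i,j)$ and $(j,i)$ concern the submatrices $M_{ij}$ and $M_{ji}$ of $X$, which are different matrices, built from different sets of coordinates $x_{kl}$, of different shapes, and the relevant minor sizes differ by $i-j$ (for $i>j$, $M_{ji}$ sits inside $M_{ij}$ as the complement of an upper-right unit-triangular block, and lowest-degree parts of size-$r$ minors of $M_{ij}$ correspond to size-$(r-i+j)$ minors of $M_{ji}$ --- this is the duality of Section~\ref{key}). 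So the identity $\det M=\det M^{\mathsf T}$ is not the mechanism at work, and your step (2) is also false as stated: the minors attached to pillars in distinct linked classes do share variables (each $M_{ij}$ occupies the whole lower-left region below row $j$ and left of column $i$, not just the diagonal band of its class). What is true, and what the paper uses, is only that the \emph{equations} coming from the untransposed classes are literally identical for $w$ and $w'$, because those pillar positions and values are unchanged.

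The missing idea that makes the paper's proof of (ii) short is the lemma $\T_w=\T_{w^{-1}}$, proved not by manipulating minors but by the global involution $x\mapsto x^{-1}$ on $BwB\to Bw^{-1}B$, whose differential at $F_0$ is $-\mathrm{id}$, together with the fact that the tangent cone is stable under scaling. Given this, one reduces to an elementary partial transposition $w'=\trp_t(w)$, observes that the transposed part of the two systems describes the Schubert varieties of the truncated permutations $\tr_t(w)$ and $\tr_t(w')$, and that $\tr_t(w')=\tr_t(w)^{-1}$, so these have equal tangent cones by the lemma; combining with the identical remaining equations gives $\T_w=\T_{w'}$. Your alternative route via a $B$-equivariant intertwiner fixing $F_0$ is close in spirit to this inversion trick, but as proposed it is only a hope; without either the inversion lemma or a worked-out version of the Section~\ref{key} duality controlling initial forms (including which minors are relevant and why no cancellations occur), your argument for (ii) does not close.
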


We will prove this theorem in Sections~\ref{ElPTrSec} and~\ref{proofdirect}.

\begin{ex}
The Coxeter elements of $S_4$, see Example~\ref{Exn4}, have the same two dissociated pillar
entries, 
$1$ and 
$2$,
and their positions in the rank matrices
differ by transpositions.
Therefore, the Schubert varieties corresponding to these elements
have the same tangent cone. This statement can be generalized, see below
\end{ex}

\begin{cor}
\label{CoCor}
Schubert varieties corresponding to the Coxeter elements
have the same tangent cone.
\end{cor}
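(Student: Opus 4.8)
The plan is to deduce Corollary~\ref{CoCor} directly from Theorem~\ref{TheThm} together with the structural results about Coxeter elements already established, namely Proposition~\ref{ExCoxSec} and the Remark following it. First I would recall that, by Proposition~\ref{ExCoxSec}, the rank matrix $r(w)$ of \emph{any} Coxeter element $w\in S_n$ has exactly $n-2$ pillar entries, one for each $i\in\{1,\dots,n-2\}$, located either at position $(i,i+1)$ with value $i$, or at position $(i+1,i)$ with value $i$. The key combinatorial observation is that all these pillar entries are pairwise \emph{dissociated}: the pillar entry indexed by $i$ lies in the interval $[i,i+1]$, and two such consecutive intervals $[i,i+1]$ and $[i+1,i+2]$ share only the endpoint $i+1$, which is not an interior point in the sense of Definition~\ref{linked}(i). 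Hence the linking graph of $r(w)$ has no edges at all, and every singleton $\{r_{i,i+1}\}$ (or $\{r_{i+1,i}\}$) is its own class of linked pillar entries.

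Next I would use this to show that any two Coxeter elements $w,w'\in S_n$ are admissibly partially transpose to each other. Indeed, by the Remark after Proposition~\ref{ExCoxSec}, the $2^{n-2}$ Coxeter elements are in bijection with the $2^{n-2}$ choices, for each $i\in\{1,\dots,n-2\}$, of one of the two admissible positions for the $i$-th pillar. Given $w$ and $w'$, let $\Lc$ be the set of those pillar entries of $r(w)$ whose position differs from the corresponding one in $r(w')$; since each pillar entry forms its own linked class, $\Lc$ is automatically a union of classes of linked pillar entries. Applying formula~(\ref{ParTrEq}) with this $\Lc$ transposes exactly the pillars where $w$ and $w'$ disagree and leaves the rest fixed, so the resulting set of pillar entries is precisely that of $r(w')$. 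Thus $r(w)$ and $r(w')$ are admissibly partially transpose in the sense of Definition~\ref{ParDef}.

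Finally, Theorem~\ref{TheThm}(ii) immediately gives $\T_w=\T_{w'}$ for any two Coxeter elements $w,w'$, which is the assertion of the corollary; one may also invoke Theorem~\ref{TheThm}(i) to reconfirm that each such partial transposition genuinely corresponds to a permutation (here, the expected Coxeter element), though this is already transparent from the Remark. I do not anticipate a serious obstacle: the only point that requires a word of justification is the dissociatedness of the pillar entries, i.e.\ that the intervals $[i,i+1]$ and $[i+1,i+2]$ have no common interior point, which is immediate from the definition. Everything else is a direct invocation of results proved earlier in the paper.
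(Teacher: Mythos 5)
Your proposal is correct and follows essentially the same route as the paper: the paper's proof is a one-line appeal to the fact that the pillar entries of Coxeter elements (Proposition~\ref{ExCoxSec}) are dissociated and differ by partial transpositions, hence Theorem~\ref{TheThm}(ii) applies. You simply spell out the details the paper leaves implicit — that the intervals $[i,i+1]$ and $[i+1,i+2]$ share no interior point, so each pillar is its own linked class and any choice of pillars to transpose is admissible.
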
 

\begin{proof}
The pillar entries of  Coxeter elements are dissociated and differ by partial transpositions;
see Proposition~\ref{ExCoxSec}.
\end{proof}

We will give an explicit description of the corresponding tangent cone in Section~\ref{SectRRM}.
Note also that the Schubert varieties corresponding to the Coxeter elements
are smooth.
Therefore, Corollary \ref{CoCor} can also be deduced from 
the theorem of Lakshmibai, see \cite{Lak1} that describes the Zariski tangent space.

\subsection{Other admissible transpositions}
Theorem~\ref{TheThm} provides large classes of 
Schubert varieties with identical tangent cones.
However, these classes can be yet larger.
In fact, there are other cases of partial transposition of pillar entries
than those considered above.

\begin{ex}
The permutations
$w=6745321$ and $w'=6753421$ in $S_7$
have the following rank matrices:
\begin{figure}[H]
\begin{center}
\includegraphics[height=3.5cm]{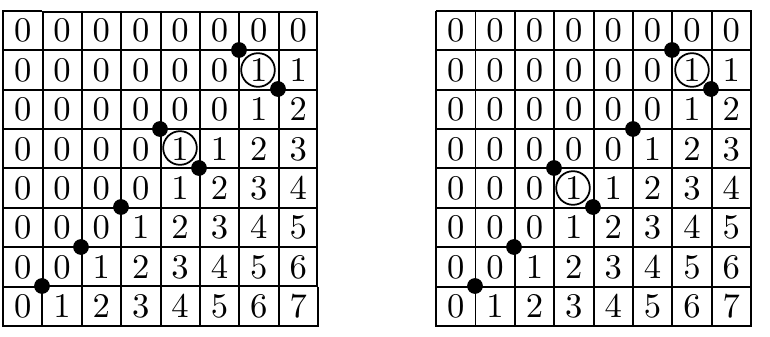}
\end{center}
\caption{An example of admissible transposition of linked pillars.}
\label{linkedpilTOK}
\end{figure}

\noindent
The rank matrix $r(w)$ has two pillar entries:
$r_{16}$ and $r_{34}$, the interval
$[3,4]$ is entirely contained in the interval $[1,6]$.
Therefore,
these pillar entries of $w$ are related
in view of Definition~\ref{linked}.
However, it is easy to check that $\T_w=\T_{w'}$,
in other words, the partial transposition relating $w$ and $w'$
should also be considered as admissible.
\end{ex}

This example is not covered by Theorem~\ref{TheThm} and shows its limits.
For instance, it shows that the converse statement to Part (ii) of the theorem is false.
Existence of such partial transpositions of pillar entries constitutes the main
difficulty in solving the initial classification problem.

\section{Combinatorial aspects of rank matrices and pillar entries}\label{CombSect}

In this section we describe the main properties of pillar entries of rank matrices
and develop the technique necessary from the proof of our main result.

Recall that the set of pillar entries of a rank matrix $r(w)$ determines the permutation~$w$
(see Proposition~\ref{PiP}).
We present two algorithms: that of reconstruction of $w$ from the pillar
entries of $r(w)$, and that of calculating the permutation of $w'$
obtained by some partial transpositions of pillar entries of $r(w)$.
This allows us to prove Part (i) of Theorem~\ref{TheThm}.

We also give an explicit formula for the (co)dimension of the Schubert cell
${\mathcal C}_w$ in terms of the pillar entries of the rank matrix $r(w)$.
This result can be useful for the further study of combinatorics
of rank matrices.

\subsection{Rank matrix and its pillar entries from the
 permutation diagram}\label{PosPilSec}

The rank matrix $r(w)$ is determined by the diagram of the corresponding permutation $w$.

\begin{prop}
\label{Property1}
One has the following formula:
\begin{equation}
\label{dots}
r_{ij}(w)=\#\{\text{\rm dots in the upper left quadrant from the cell } (i,j)\}.
\end{equation}
\end{prop}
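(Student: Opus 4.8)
The plan is to deduce the formula directly from Fulton's description~(\ref{Ful}), which says that for the permutation $w$ the rank matrix is $r_{ij}(w)=\#\{k\leq i \mid w(k)\leq j\}$. So the entire task is a bookkeeping translation between the index-set description on the right of~(\ref{Ful}) and the geometric ``dots in the upper-left quadrant'' description on the right of~(\ref{dots}).

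First I would fix notation for the quadrant. By the stated convention for the diagram of $w$, a dot sits at position $(k,\ell)$ precisely when $\ell=w(k)$; there is exactly one dot in each row $k$ (for $1\leq k\leq n$) and exactly one in each column. I would take ``the upper left quadrant from the cell $(i,j)$'' to mean the set of cells $(k,\ell)$ with $k\leq i$ and $\ell\leq j$ (this is consistent with the worked matrices in Example~\ref{Exn4}, where $r_{ij}$ counts dots weakly above and weakly to the left). Then the dots lying in that quadrant are exactly the pairs $(k,w(k))$ with $k\leq i$ and $w(k)\leq j$, so their number is $\#\{k\leq i \mid w(k)\leq j\}$, which is precisely $r_{ij}(w)$ by~(\ref{Ful}). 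That is the whole argument.

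The only genuine points to nail down — and the place where a careless statement could go wrong — are the off-by-one / boundary issues with the index range $0\leq i,j\leq n$ and the placement of dots "in the upper left corner of the cell" rather than at lattice points. I would check the extreme cases explicitly: for $i=0$ or $j=0$ the quadrant contains no dots and indeed $r_{0j}=r_{i0}=0$; for $i=n$ the quadrant picks up all rows, giving $\#\{\ell\leq j\}=j$ rearranged, i.e.\ $r_{nj}=j$; similarly $r_{in}=i$. These agree with the boundary identities listed in Section~\ref{rkmat}, confirming that the convention for ``upper left quadrant from $(i,j)$'' is the inclusive one and that no shift is needed.

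I expect essentially no obstacle here: the proposition is a reformulation of~(\ref{Ful}), and the proof is a one-line observation once the quadrant convention is pinned down. The mild subtlety — hence worth a sentence in the writeup — is simply making the geometric convention precise enough that the equivalence with~(\ref{Ful}) is manifestly a tautology rather than something requiring a computation. So the write-up would be: recall~(\ref{Ful}); identify the dots in the quadrant below/right-constrained by $(i,j)$ with the index set $\{k\leq i\mid w(k)\leq j\}$; conclude. One could optionally remark that this also re-derives the combinatorial characterizations~(\ref{ComPil}) and~(\ref{ComEss}) of pillar and essential entries by reading off the local dot pattern around the cell $(i,j)$.
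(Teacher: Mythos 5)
Your proposal is correct and matches the paper's approach: the paper's entire proof is the remark that the formula ``readily follows from~(\ref{Ful})'', which is exactly the identification you spell out between the dots in the quadrant and the index set $\{k\leq i\mid w(k)\leq j\}$. Your extra care with the quadrant convention and the boundary cases is just a more detailed write-up of the same one-line argument.
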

\begin{proof}
This readily follows from~(\ref{Ful}).
\end{proof}

The positions of the pillar entries in $r(w)$ can be determined by local structure of the 
diagram of $w$. 
Consider horizontal strips of height~$1$ 
and a vertical strips of width~$1$ in the diagram, such that
the upper left and the lower right corners are marked dots of the permutation:
\begin{center}
\includegraphics[height=3.5cm]{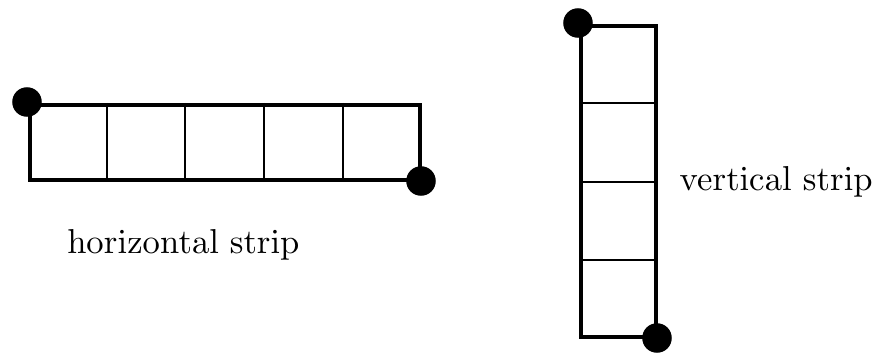}
\end{center}

\begin{prop}
\label{Property2}
Every pillar is located at the intersection of a horizontal strip of height~$1$ 
and a vertical strip of width~$1$.
\end{prop}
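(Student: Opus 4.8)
The plan is to translate the local conditions~\eqref{ComPil} defining a pillar entry into statements about the permutation diagram, and then to recognize the ``horizontal strip'' and ``vertical strip'' of the statement as precisely the objects encoded by those conditions. Recall from~\eqref{ComPil} that $r_{ij}$ is a pillar entry if and only if $w(i)\le j$, $w(i+1)>j$, $w^{-1}(j)\le i$, and $w^{-1}(j+1)>i$. The first two conditions say that among rows $1,\dots,i$ there is a dot in a column $\le j$, namely in row $i$ itself at column $w(i)\le j$, and that row $i+1$ has no dot in a column $\le j$; combined with~\eqref{dots}, this is what forces the ``jump'' $r_{ij}=r_{i-1,j}+1=r_{i+1,j}$ downward. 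Symmetrically, the last two conditions say that column $j$ carries a dot in a row $\le i$ — namely at row $w^{-1}(j)\le i$ — while column $j+1$ carries no dot in a row $\le i$.

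First I would make precise which dot plays the role of the ``upper left corner'' of the horizontal strip. The condition $w(i)\le j$ together with $w^{-1}(j)\le i$ means the dot in row $i$ sits weakly left of column $j$, and the dot in column $j$ sits weakly above row $i$. I claim these two are in fact a single dot sitting exactly at position $(i,j)$ is \emph{not} what we want; rather, the horizontal strip of height $1$ should be taken to be row $i$, bounded on the left by its own dot at $(i,w(i))$ and on the right by... here one has to be slightly careful: the ``lower right corner is a marked dot'' refers to the dot $(w^{-1}(j),j)$ or to $(i+1,w(i+1))$ depending on the orientation. The cleanest route is to argue that because $w(i)\le j < w(i+1)$ and $w^{-1}(j)\le i< w^{-1}(j+1)$, the two dots $P=(w^{-1}(j),\,j)$ and $Q=(i,\,w(i))$ together with the cell $(i,j)$ form exactly the configuration in the \texttt{strip.pdf} picture: $Q$ is the upper-left corner of a height-$1$ horizontal strip (its row is row $i$) whose lower-right corner is the dot lying at the bottom of the width-$1$ vertical strip in column $j$, which is $P$ shifted appropriately. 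I would spell out the four inequality cases and check in each that the cell $(i,j)$ lies in the intersection of the row through $Q$ and the column through $P$.

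The main obstacle, and the only real content, is bookkeeping: matching the off-by-one shifts in~\eqref{ComPil} (which involve $i+1$ and $j+1$) with the convention in the Definition of the diagram that a dot is placed in the \emph{upper left corner} of cell $(i,j)$, and with the picture's convention for which corners of the strips are marked. There is genuine potential for an indexing error here, so I would pin down a single running example — say one of the encircled pillars in Example~\ref{Exn4} — and verify the claimed strip structure against it before writing the general argument. Once the dictionary between~\eqref{ComPil} and ``row $i$ is a horizontal strip bounded by dots, column $j$ is a vertical strip bounded by dots, and $(i,j)$ is their crossing cell'' is set up, the proof is a one-line invocation of~\eqref{ComPil}, exactly as the earlier ``This readily follows from~(\ref{Ful})'' proofs in the section. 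I would therefore present it as: by~\eqref{ComPil}, the four defining inequalities of a pillar are equivalent to the existence of dots at $(i,w(i))$ and $(w^{-1}(j),j)$ with no dots obstructing them in row $i+1$ (columns $\le j$) or column $j+1$ (rows $\le i$), which is exactly the assertion that $(i,j)$ sits at the intersection of a height-$1$ horizontal strip and a width-$1$ vertical strip whose extreme corners are dots of $w$.
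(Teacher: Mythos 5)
Your overall route is the same as the paper's: the printed proof is literally the one-line remark that the statement is a direct consequence of~\eqref{ComPil}, and your concluding dictionary (the dot of row $i$ lies in a column $\le j$, the dot of row $i+1$ in a column $>j$, the dot of column $j$ in a row $\le i$, the dot of column $j+1$ in a row $>i$) is exactly that translation.

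One intermediate claim, however, is incorrect and should be removed: you assert that the horizontal strip through row $i$ has as its lower-right corner ``the dot lying at the bottom of the width-$1$ vertical strip in column $j$, which is $P$ shifted appropriately.'' In general the two strips are bounded by four distinct dots, not two. The horizontal strip of height $1$ containing the pillar is the one with upper-left dot $(i,w(i))$ and lower-right dot $(i+1,w(i+1))$; it exists precisely because $w(i)\le j<w(i+1)$. The vertical strip of width $1$ is the one with upper-left dot $(w^{-1}(j),j)$ and lower-right dot $(w^{-1}(j+1),j+1)$; it exists precisely because $w^{-1}(j)\le i<w^{-1}(j+1)$. The bottom dot of the vertical strip coincides with the right dot of the horizontal strip only in the special case $w(i+1)=j+1$. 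With the correct identification, the position $(i,j)$ lies in the column range $[w(i),w(i+1)]$ of the horizontal strip and in the row range $[w^{-1}(j),w^{-1}(j+1)]$ of the vertical strip, hence in their intersection, and the case analysis you postpone (``I would spell out the four inequality cases'') becomes unnecessary.
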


\begin{proof}
This is a direct consequence of~(\ref{ComPil}).
\end{proof}


\begin{ex}
Rank matrix and its pillar entries of $w=953471682$ in $S_9$ is as follows.
\begin{figure}[H]
\begin{center}
\includegraphics[width=5cm]{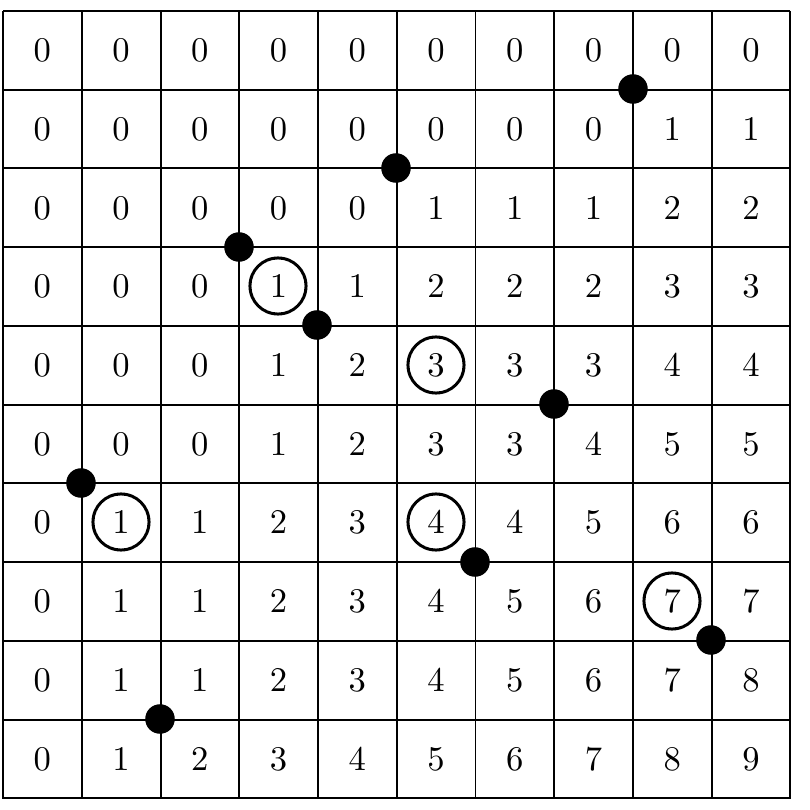}
\includegraphics[width=5cm]{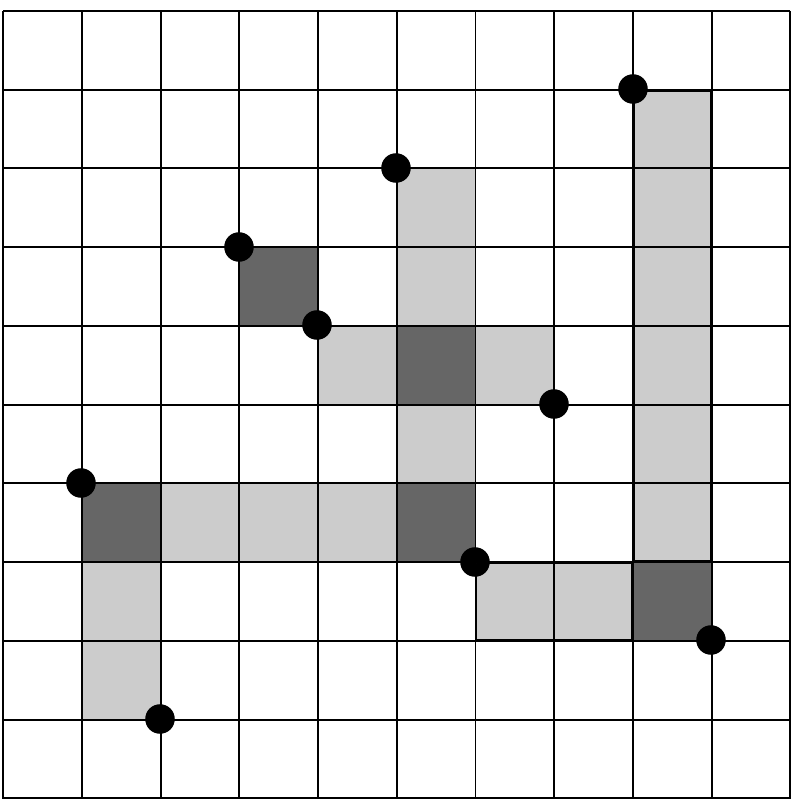}
\end{center}
\caption{Localizing the pillars.}
\label{exmatpil}
\end{figure}
\end{ex}

It will be useful in the sequel to have the following observation.

\begin{prop}
Every horizontal strip of height $1$ 
necessarily intersects with a vertical strip of width $1$, and vice-versa. 
\end{prop}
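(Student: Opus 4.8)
\emph{Plan.} The first step is to translate the two kinds of strip into the one-line notations of $w$ and $w^{-1}$. A horizontal strip of height $1$ has its upper-left and lower-right corners at two dots in consecutive rows $a$ and $a+1$; for those dots to occupy the upper-left and lower-right corners one needs $w(a)<w(a+1)$, and the strip is then the closed rectangle with rows in $[a,a+1]$ and columns in $[w(a),w(a+1)]$. Symmetrically, a vertical strip of width $1$ corresponds to a pair of consecutive columns $b,b+1$ with $w^{-1}(b)<w^{-1}(b+1)$, occupying rows in $[w^{-1}(b),w^{-1}(b+1)]$ and columns in $[b,b+1]$. Replacing $w$ by $w^{-1}$ reflects the diagram across the main diagonal and interchanges the two kinds of strip, so it is enough to prove that every horizontal strip of height $1$ meets some vertical strip of width $1$; the ``vice versa'' is then automatic.

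The heart of the argument is a discrete intermediate value step. Fix a horizontal strip, that is, an index $a$ with $w(a)<w(a+1)$, and run through the columns $w(a),w(a)+1,\dots,w(a+1)$. Call such a column $v$ \emph{early} if the (unique) dot in that column lies in a row $\le a$, i.e.\ $w^{-1}(v)\le a$, and \emph{late} otherwise. The end columns carry the dots of rows $a$ and $a+1$, so $w(a)$ is early and $w(a+1)$ is late; hence some consecutive pair $b,b+1$ inside $[w(a),w(a+1)]$ has $b$ early and $b+1$ late. For this $b$ one has $w(a)\le b<w(a+1)$ and $w^{-1}(b)\le a<w^{-1}(b+1)$, so $w^{-1}(b)<w^{-1}(b+1)$: the pair $b,b+1$ is a vertical strip of width $1$, with column range $[b,b+1]\subseteq[w(a),w(a+1)]$ and row range $[w^{-1}(b),w^{-1}(b+1)]\supseteq[a,a+1]$. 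The two rectangles therefore overlap in the cell $[a,a+1]\times[b,b+1]$, which proves the claim.

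One may note, as a by-product, that the four inequalities $w(a)\le b$, $b<w(a+1)$, $w^{-1}(b)\le a$, $w^{-1}(b+1)>a$ are precisely the conditions~(\ref{ComPil}) for $r_{ab}$ to be a pillar entry; thus in fact \emph{every horizontal strip of height~$1$ contains a pillar entry}, which reproves the intersection point predicted by Proposition~\ref{Property2}. I do not anticipate a genuine obstacle: the argument is elementary, and the only points requiring a little care are the degenerate case $w(a+1)=w(a)+1$ (where the pair $b,b+1$ is simply $w(a),w(a)+1$) and keeping the bookkeeping straight, namely that the chosen first late column $b+1$ satisfies $b+1\le w(a+1)$ and $b\ge w(a)$, so that the vertical strip stays within the horizontal one and the overlap is a full cell rather than a mere common edge.
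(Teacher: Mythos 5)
Your proof is correct and takes essentially the same route as the paper: both arguments scan the columns spanned by the horizontal strip and locate two consecutive columns whose dots straddle rows $a,a+1$, the paper doing this by a three-case analysis on the intermediate dots ($i_1>i+1$, $i_{k-1}<i$, or a switch in the middle) and you by the uniform ``first late column'' discrete intermediate-value sweep, which subsumes those cases. Your by-product --- that the four inequalities produced are exactly the conditions~(\ref{ComPil}), so every horizontal (and, by transposition, vertical) strip actually contains a pillar entry $r_{ab}$ --- is correct and slightly sharper than what the paper states at this point.
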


\begin{proof}
Let $(i,j),\;(i+1,j+k)$, $k>0$, be the marked dots of a horizontal
strip of height one.
If $k=1$, then our horizontal strip is also a vertical strip, and these two
(identical) strips intersect each other.
Let $k>1$, and let $(i_1,j+1),\,\ldots,\,(i_{k-1},j+k-1)$
be the marked dots of the diagram of $w$, lying on the vertical lines crossing our strip;
neither of $i_1,\,\ldots,i_{k-1}$ is $i$ or $i+1$.
If $i_1>i+1$, then the vertical strip with marked dots $(i,j),\,(i_1,j+1)$ intersects
our horizontal strip.
Similarly, if $i_{k-1}<i$, then the vertical strip with marked dots 
$(i_{k-1},j+k-1),\;(i+1,j+k)$ intersects our horizontal strip.
If $i_1<i$ and $i_{k-1}>i+1$, then for some $s$, $i_s<i$ and $i_{s+1}>i+1$.
In this case, the vertical strip with marked dots 
$(i_s,j+s),\,\ldots,\,(i_{s+1},j+s+1)$ intersects our horizontal strip.
\begin{figure}[H]
\begin{center}
\includegraphics[width=4cm]{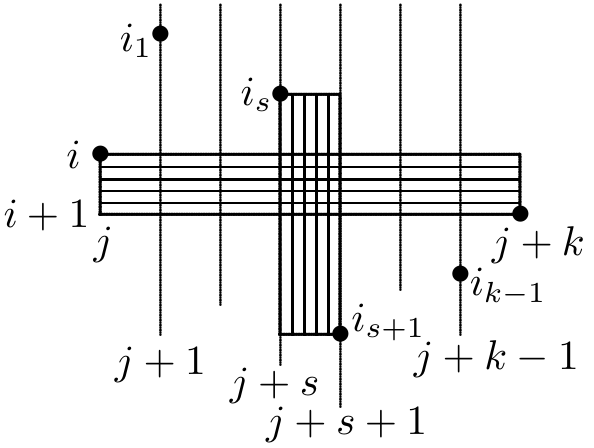}
\end{center}
\label{FigStrip}
\end{figure}
The proof of the vice-versa statement is the same, with the coordinates
of every marked dot switched.
\end{proof}

\subsection{Reconstructing $w$ from the pillar entries of $r(w)$}\label{wfrompil}

In this section we present an algorithm of constructing the diagram of $w$ from the set of pillar entries. 

Let us introduce some useful notation. First we numerate the pillar entries in the 
lexicographical order, that is, from left to right in each row and then counting the rows from top to bottom. Then we set:\smallskip

$(p_i,q_i)=$ the position of the $i$-th pillar;\smallskip

$K_i=r_{p_iq_i}(w)=$ the value of the $i$-th pillar entry;\smallskip

$NW_i=$ The North-west region of the $i$-th pillar entry. \smallskip

We draw the $(n+1)\times(n+1)$ square grid; columns and rows of this grid are separated by~$n$ horizontal lines and~$n$ vertical lines. 
We mark the given pillar entries in $N$ cells of the grid (thus, $N$ is the number of pillar entries). Our permutation $w$ will appear as a set of $n$ dots in the intersections of horizontal and vertical lines, one on each horizontal line and one on each vertical line. 

The diagram of $w$ is constructed in $N+1$ steps. At every step, we place some dots into the interior of the region $NW_i$. If the action requested at any step is impossible by any reason, then our set of ``pillar entries" is not the set of pillar entries of $r(w)$ for any $w$.

For $i=1,\dots,N$, at the $i$-th step, we fist count the number of dots placed in the interior 
of~$NW_i$ at the previous steps. If this number is $L$, we need to add $k_i=K_i-L$ dots into 
$$
NW_i-(NW_1\cup\dots\cup NW_{i-1}).
$$
For this, we numerate the horizontal and vertical lines within $NW_i$ which do not bear any of the $L$ dots placed at the previous steps, respectively from bottom to tor and from right to left. Then, for $j=1,\dots,k_i$ we place a dot at the intersections of the vertical line number $j$ and the horizontal line number $k_i+1-j$. 

\begin{figure}[H]
\begin{center}
\includegraphics[width=2.5cm]{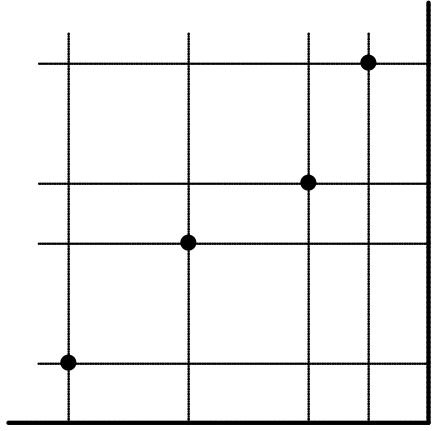}
\end{center}
\caption{Placement of the dots.}
\label{FigAlg}
\end{figure}

\noindent
Our algorithm requests that neither of these dots falls into any of the regions $NW_1,\dots,NW_{i-1}$. 

The final, $(N+1)$-st step works according the same rules with the whole matrix playing the role of $NW_{N+1}$.

The above algorithm is the only way to mark dots without creating an extra pillar  
or changing the values of the pillar entries.
Note that, for the Fulton essential set, a reconstruction algorithm is given in~\cite{EL}.

\begin{ex} 
Figure \ref{algo} below illustrates our algorithm
for $w=853471692\in{}S_9$. 
At each step we color the North-West region at the pillar. 
The dark grey part of the region intersects with North-West regions at previous pillar entries;
the light grey part is the area where the new dots are placed.
\begin{figure}[H]
\begin{center}
\includegraphics[width=4cm]{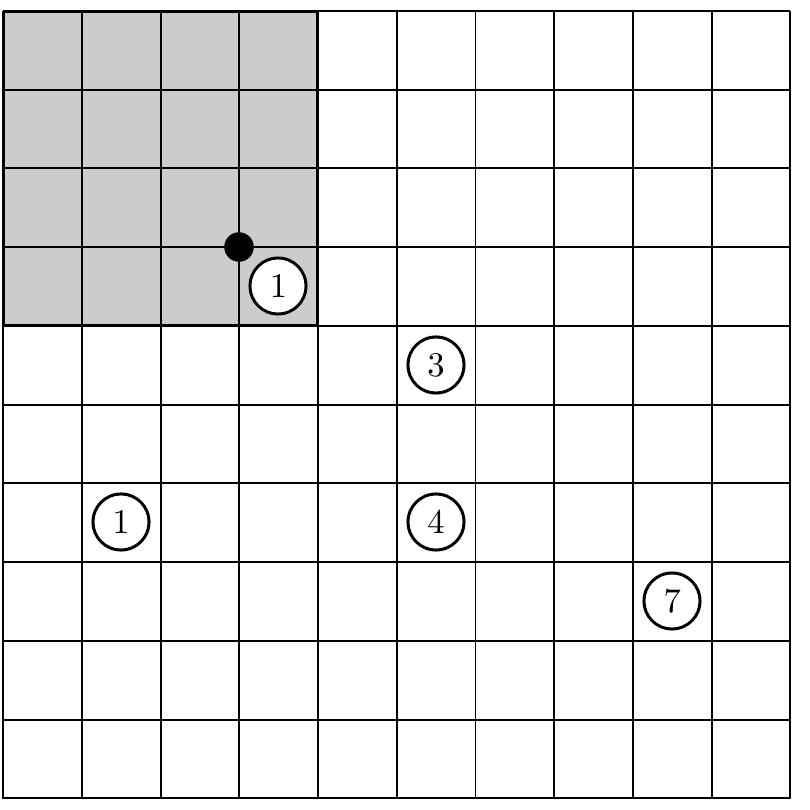}
\includegraphics[width=4cm]{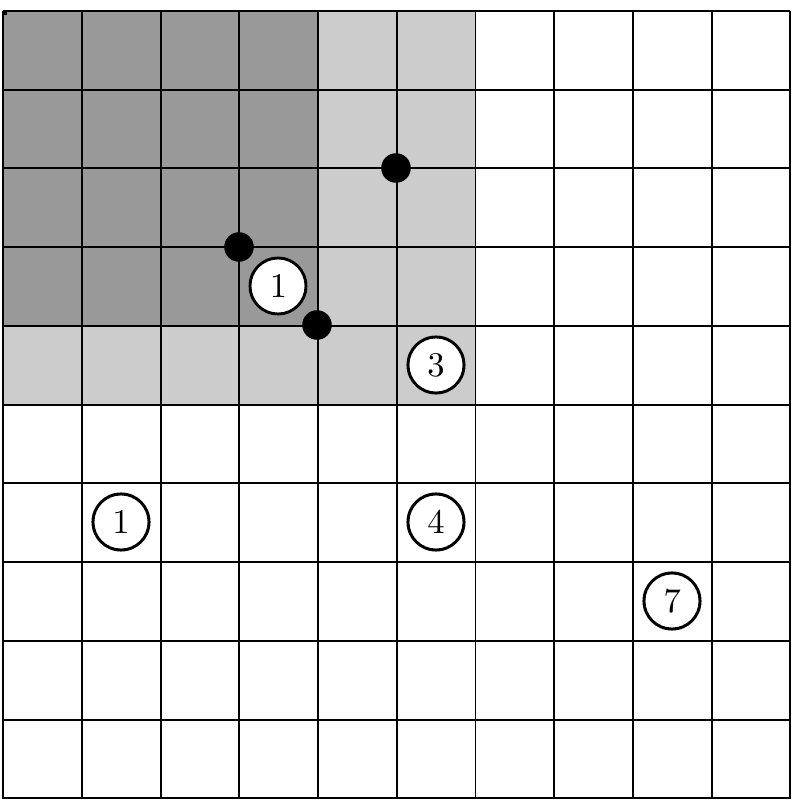}
\includegraphics[width=4cm]{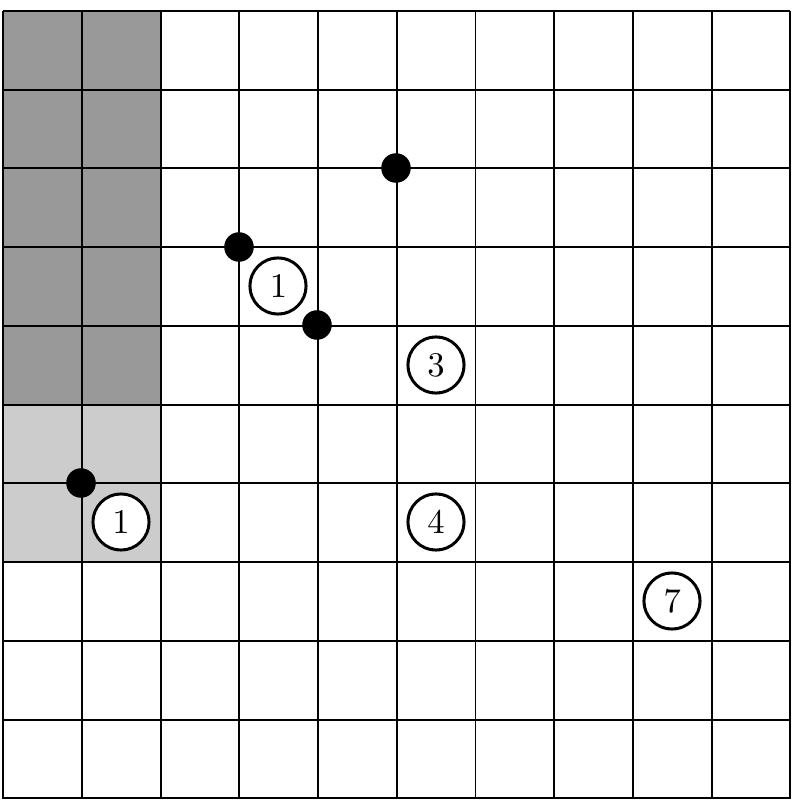}\\
\includegraphics[width=4cm]{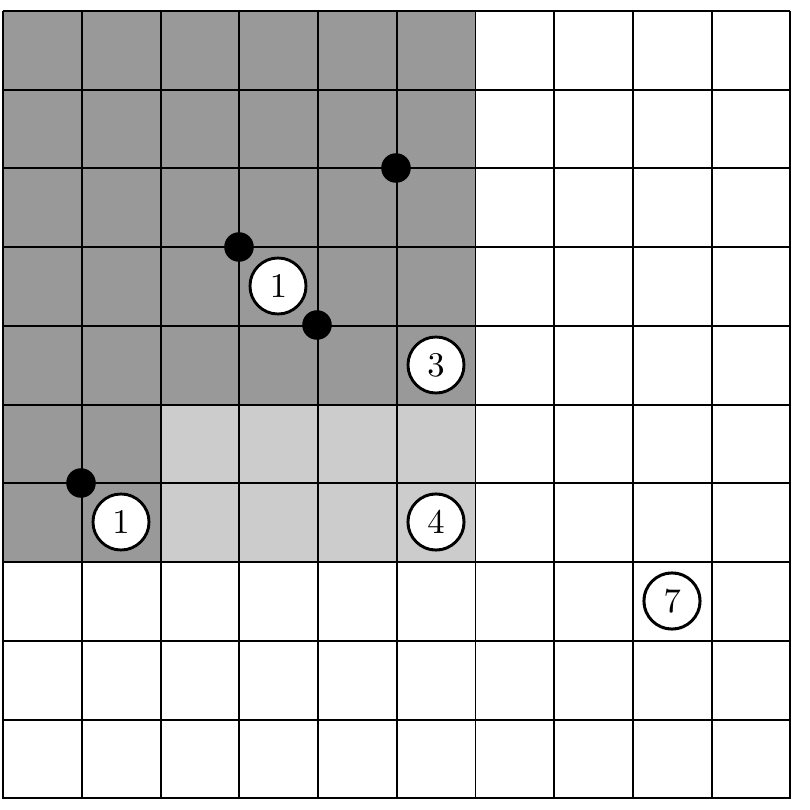}
\includegraphics[width=4cm]{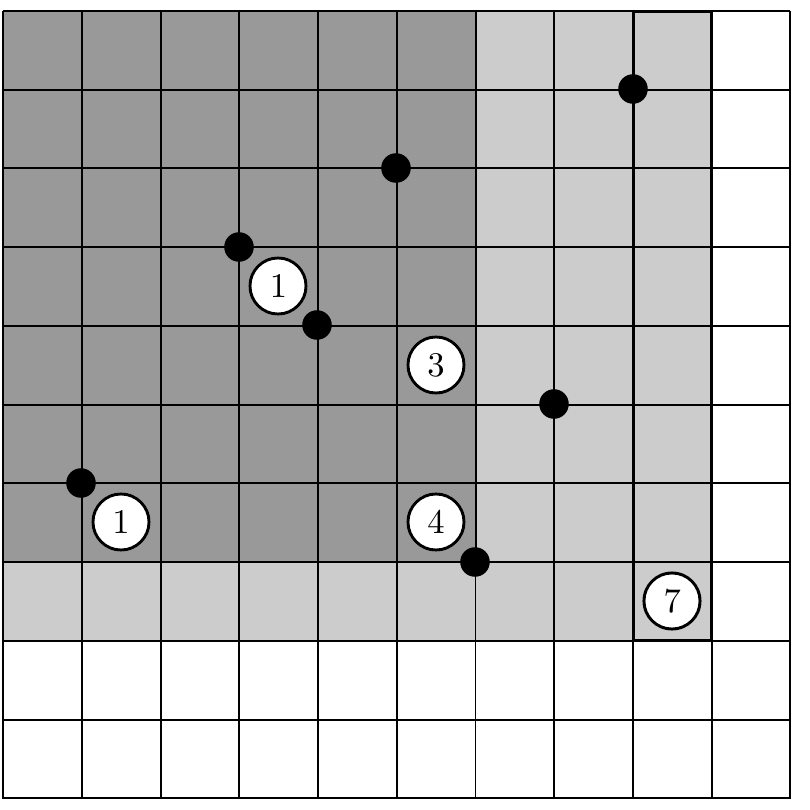}
\includegraphics[width=4cm]{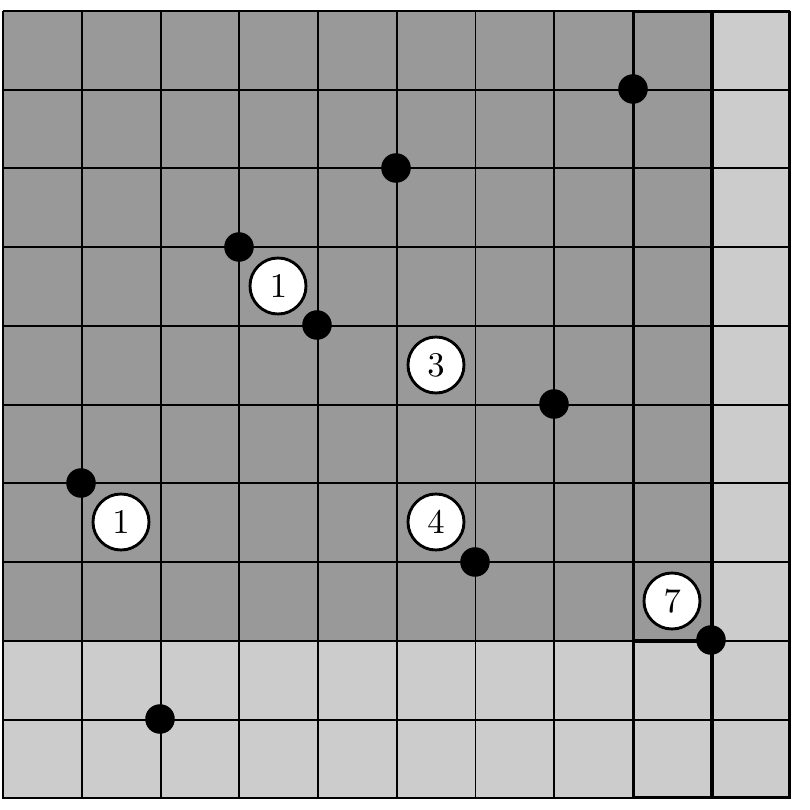}\\
\end{center}
\caption{Recovering the permutation $w$ from the pillar entries.}
\label{algo}
\end{figure}
\end{ex}

\begin{ex} 
Given positive integers, $i,j,a$ satisfying the conditions:
$$
a\leq{}i,j\leq{}n,
\qquad
i+j-a<n,
$$
there exists an element, $w_{i,j,a,n}\in{}S_n$, whose rank matrix
has the unique pillar entry $r_{ij}=a$.
The above algorithm immediately gives:
$$
w_{i,j,a,n}=
(n,n-1,\ldots,n+1-i+a,j,j-1,\ldots,j-a+1,n-i+a,n-i+a-1,\ldots,j+1,j-a,j-a-1,\ldots,1).
$$
This element appeared in~\cite[Corollary~4.5]{RWY}.
\end{ex}

\begin{rem}
If one defines the following partial ordering on the set of ordered pillar entries
\begin{eqnarray*}
j\prec i &\Longleftrightarrow& \text{the $j$-th pillar lies in the region at the North-West of the $i$-th pillar}\\
&&\text{i.e. } j<i  \text{ , } p_j\leq p_i \text{ and } q_j\leq q_i
\end{eqnarray*}
one can write the following relation between the $K_{i}$'s and $k_{i}$'s
$$
K_{i}=k_{i}+\sum_{j\prec i} k_{j}.
$$
\end{rem}

\subsection{(Co)dimension from the set of pillar entries}
The dimension and codimension of a  Schubert cell ${\mathcal C}_w$
(or a Schubert variety $\X_w$)
can be computed directly form the set of pillar entries 
of the corresponding rank matrix $r(w)$. 

The number 
$$
\codim({{\mathcal C}_w})=\ell(ww_0)=\#\{i<j : w(i)<w(j)\}
$$ 
can be obtained in the diagram of $w$ 
counting the intersections of the horizontal segments 
and the vertical segments of the grid 
that are at the right and above each dots, respectively:
\begin{equation}
\label{codimcross}
\codim({{\mathcal C}_w})=\#\{\text{crosses in the diagram of $w$}\},
\end{equation}
 see Figure \ref{crossdim}.

\begin{figure}[H]
\begin{center}
\includegraphics[width=5cm]{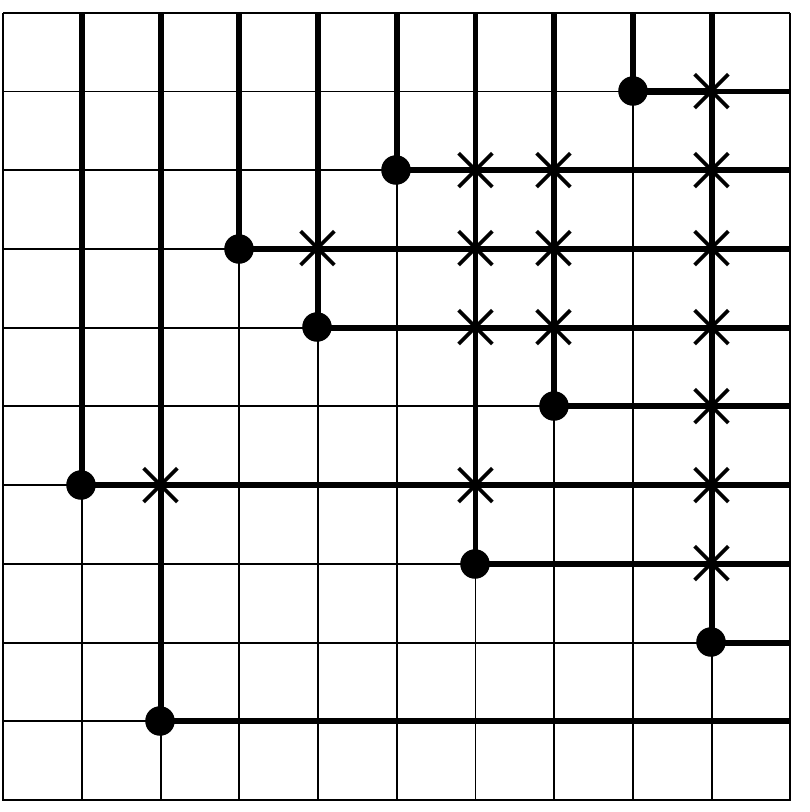}
\end{center}
\caption{$\ell(ww_0)$ from the diagram of $w$.}
\label{crossdim}
\end{figure}

The following formula gives the codimension of a Schubert cell from the data of its pillar entries.

\begin{thm}
\label{codimDBF} 
Using the notation of Section \ref{wfrompil} one computes
$$
\codim({{\mathcal C}_w})=\sum_{i=1}^N\, k_i\; (K_i+n-p_i-q_i).
$$
\end{thm}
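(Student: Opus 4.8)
The plan is to compute the left-hand side using the crosses characterization~\eqref{codimcross}, keeping careful track of which dot each cross is attached to, and then reorganize the count according to the reconstruction algorithm of Section~\ref{wfrompil}. Recall that in the algorithm, at step $i$ we place exactly $k_i$ new dots inside the region $NW_i-(NW_1\cup\dots\cup NW_{i-1})$, and at the final step $(N+1)$ we place the remaining dots using the whole matrix as $NW_{N+1}$. Every dot is placed at exactly one step, so the set of $n$ dots is partitioned into groups $G_1,\dots,G_{N+1}$, with $|G_i|=k_i$ for $i\le N$. The idea is to show that the total number of crosses equals $\sum_{i=1}^N k_i(K_i+n-p_i-q_i)$, and in particular that the dots placed at the final step $(N+1)$ contribute \emph{no} crosses at all.

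\textbf{Key steps.} First I would fix the geometric picture: a cross sits at the intersection of the vertical segment going up from some dot $d$ and the horizontal segment going right from some dot $d'$; equivalently, writing the dots as $(a,w(a))$, a cross corresponds to a pair $a<a'$ with $w(a)<w(a')$ (an ``ascent pair''), counted at the cell $(a',w(a))$. Next I would analyze, for a fixed $i\le N$, the contribution of the $k_i$ dots of $G_i$. Within the region $NW_i$ (a rectangle with $p_i$ rows and $q_i$ columns in the grid, containing $K_i$ dots total), the dots of $G_i$ were placed along an anti-diagonal pattern in the ``free'' lines of $NW_i$; one checks that each dot $d\in G_i$ is the \emph{lower-right} endpoint of crosses with exactly $K_i+n-p_i-q_i$ other dots, and the upper-left endpoint of crosses with dots of $G_i$ and later groups that are already accounted for in the other dot's count. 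The bookkeeping here must be set up so that each cross is charged to exactly one of its two defining dots — the natural convention is to charge it to the dot lying in the earlier group, or equivalently to the dot that is ``more North-West''. Then one verifies: (a) a dot $d\in G_i$ has precisely $K_i$ dots strictly North-West of it (those are the $K_i$ dots of $NW_i$, minus those not North-West of $d$, plus cross-contributions from the anti-diagonal — this needs the precise anti-diagonal placement) and precisely $n-p_i-q_i+(\text{something})$ dots to the South-East that form ascent pairs charged to $d$; combining gives the factor $K_i+n-p_i-q_i$. Finally, (b) for the last group $G_{N+1}$: since there are no pillars left to force any inversions-turned-ascents, the dots placed at step $N+1$ lie on a single decreasing anti-diagonal relative to everything already placed, producing zero new crosses; this is exactly the statement that $w$ restricted outside the pillar-constrained region is ``as decreasing as possible''.

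\textbf{Alternative organization.} A cleaner route may be to argue by induction on $N$, or to use the remark at the end of Section~\ref{wfrompil} giving $K_i=k_i+\sum_{j\prec i}k_j$, together with the known formula $\codim(\mathcal C_w)=\ell(ww_0)$ and Theorem~\ref{codimDBF}'s analogue for a single pillar (the element $w_{i,j,a,n}$ of the last example, for which $\codim=k(K+n-p-q)$ with $k=K=a$, $p=i$, $q=j$ — a sanity check one should perform first). One would then show the crosses decompose additively over the ``pillar blocks'' so that the general formula is the sum of the single-pillar contributions, with $k_i$ replacing $K_i$ as the multiplicity because the overlap regions $NW_1\cup\dots\cup NW_{i-1}$ remove the double-counted dots.

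\textbf{Main obstacle.} The hard part will be the careful combinatorial bookkeeping in step (a): proving that each of the $k_i$ dots of group $G_i$ contributes exactly $K_i+n-p_i-q_i$ crosses, \emph{net} after assigning each cross to a unique dot, without over- or under-counting the crosses among dots of the same group or between consecutive groups whose $NW$-regions overlap. Getting the charging convention right — so that the crosses internal to an anti-diagonal run either vanish or are absorbed into a neighbor's count — and verifying it against the placement rule (vertical line $j$ against horizontal line $k_i+1-j$) is where the real work lies; the vanishing of the final group's contribution and the reduction to single-pillar blocks should then follow with comparatively little effort.
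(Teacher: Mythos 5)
Your overall skeleton is the same as the paper's: use the crosses formula \eqref{codimcross}, partition the dots into the groups $G_1,\dots,G_{N+1}$ produced by the reconstruction algorithm of Section~\ref{wfrompil}, charge each cross to the dot on whose horizontal segment it lies (the North-West dot of the ascent pair), and show that each dot of $G_i$ carries $K_i+n-p_i-q_i$ crosses while the last group carries none. However, the way you propose to obtain the factor in step (a) is not correct, and it is exactly there that the real content of the proof lies. Under the row-charging convention, the dots North-West of a dot $d\in G_i$ contribute nothing to $d$'s count (those crosses are charged to them, not to $d$), and the assertion that $d$ has ``precisely $K_i$ dots strictly North-West of it'' is simply false in general (e.g.\ for $w=\mathrm{id}$, the dot placed at step $3$ has $2$ dots to its North-West while $K_3=3$). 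The correct statement, which is the crux, is purely about the South-East side: the dots South-East of $d$ coincide with the dots South-East of the \emph{pillar position} $(p_i,q_i)$, and their number is the quadrant count at the pillar, $n-K_i-(p_i-K_i)-(q_i-K_i)=K_i+n-p_i-q_i$; here $K_i$ enters as the number of dots North-West of the pillar, not of $d$.

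The missing argument for that identification (and for the vanishing of the last group's contribution) is the following, and your sketch does not supply it. First, no dot placed at an \emph{earlier} step can lie South-East of $d$: if $e\in NW_m$ with $m<i$ and $d$ were strictly North-West of $e$, then $d$ would lie in $NW_m$, which the algorithm explicitly forbids for dots placed at step $i>m$; this is also what actually justifies your claimed equivalence ``earlier group $=$ more North-West'', which you assert without proof. Second, no later-step dot can be South-East of $d$ without being South-East of the pillar: a dot in a row of $(a,p_i]$ or a column of $(b,q_i]$ (where $(a,b)$ is the position of $d$) is impossible at a later step, because the algorithm uses the bottommost free horizontal lines and rightmost free vertical lines inside $NW_i$, so every such line either already bears an earlier dot or is used at step $i$ itself. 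Your point (b) is likewise misstated: the final-step dots are \emph{not} ``decreasing relative to everything already placed'' (they do form ascent pairs with earlier dots; those crosses are charged to the earlier dots); what is needed, and what follows from the same two observations, is that no dot lies South-East of a final-step dot. Since you yourself flag step (a) as unresolved and the route you indicate for it would not close, this is a genuine gap, albeit one sitting inside the right overall strategy.
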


\begin{proof}
This formula is obtained using the reconstruction algorithm
of $w$ from the set of pillar entries 
(see Section \ref{wfrompil}) and~\eqref{codimcross}.
 For each dot in the diagram of $w$, we count the crosses on the horizontal segment at its right. 
At step $i$ of the construction, the $k_i$ new dots will contribute with the same number of crosses 
in~\eqref{codimcross}. The reconstruction algorithm of $w$ implies that these crosses can be produced only by the dots that are located at the South-East of the $i$-th pillar (otherwise it would contradict the fact that one uses the closest available vertical lines at the left of the $i$-th pillar). The number of dots in the South-East area is easy to compute from our data:
$$
\begin{array}{rcl}
\#\{\text{dots at SE}\}&=&  \#\{\text{dots}\}-\#\{\text{dots at NW}\}-\#\{\text{dots at NE}\}-\#\{\text{dots at SW}\}\\[4pt]
&=&n-K_i-(p_i-K_{i})-(q_i-K_i)\\[4pt]
&=&K_i+n-p_i-q_i.
\end{array}
$$
Hence the result.
\end{proof}

\subsection{Truncated permutation}

Given a permutation of $w\in{}S_{n}$, 
we will show the existence of permutations whose pillar entries
form the subsets in the set of pillar entries of $r(w)$
obtained by removing of some classes of linked pillar entries.

The pillar entries of $r(w)$ are 
decomposed in the disjoint union of classes of linked pillar entries:
$\{r_{ij}(w)\}=\Lc_{1}\sqcup \Lc_{2}\sqcup \ldots \sqcup \Lc_{s}.$
These classes correspond to subintervals $I_1,I_2,\ldots,I_s$
of the interval $[0,n]$; these subintervals have integer endpoints and pairwise
have no interior points.
The class $\Lc_t$ corresponds to the interval $I_t$, if, for every
$r_{ij}(w)\in\Lc_t$ both $i$ and $j$ belong to~$I_t$.
We order the intervals $I_t$, from the left to the right,
and order the classes $\Lc_t$ accordingly.
Consequently, if $u< v$ and $r_{ij}(w)\in \Lc_{u},\,r_{i'j'}(w)\in \Lc_{v}$,
then $i\leq{}i',\,i\leq{}j',j\leq{}i',\,j\leq{}j'$.
In particular, for $u< v$, all pillar entries from $\Lc_{u}$ lexicographically
precede the pillar entries from $\Lc_{v}$.

The following statement 
is our first application of the reconstruction algorithm 
presented in Section~\ref{wfrompil}.

\begin{prop}
For every $t\in \{1, \ldots, s\}$ there exists a unique permutation, denoted $\tr_{t}(w)$, having 
$\Lc_{1}\sqcup \Lc_{2}\sqcup \ldots \sqcup \Lc_{t}$ as set of pillar entries.
\end{prop}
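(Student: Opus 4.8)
The plan is to apply the reconstruction algorithm of Section~\ref{wfrompil} directly to the prescribed set of pillar entries $\Lc_1\sqcup\dots\sqcup\Lc_t$, and to verify that the algorithm does not fail and that it produces a permutation whose pillar set is exactly this prescribed set. The key observation is that, because the classes $\Lc_1,\dots,\Lc_s$ are ordered so that for $u<v$ every pillar of $\Lc_u$ lexicographically precedes every pillar of $\Lc_v$ (and in fact $i\le i',\,i\le j',\,j\le i',\,j\le j'$ in the notation of the excerpt), the steps of the algorithm that process $\Lc_1,\dots,\Lc_t$ are performed in exactly the same order, and in exactly the same configuration of North-West regions, whether one runs the algorithm for $w$ or for the truncated data. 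First I would make this precise: run the algorithm of Section~\ref{wfrompil} on the full set of pillar entries of $r(w)$; by hypothesis it succeeds and yields $w$. The dots placed while processing the pillars in $\Lc_1\cup\dots\cup\Lc_t$ depend only on those pillars and on the dots placed at earlier steps, and every earlier step also processes a pillar in $\Lc_1\cup\dots\cup\Lc_t$ (since the classes are processed in lexicographic order and $\Lc_{t+1},\dots,\Lc_s$ come strictly later). Hence the partial run of the algorithm on the truncated data is literally a prefix of the successful run for $w$, so it too succeeds through step $N_t$, where $N_t=|\Lc_1|+\dots+|\Lc_t|$.

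Next I would handle the final step, step $N_t+1$, in which the whole $(n+1)\times(n+1)$ grid plays the role of $NW_{N_t+1}$ and the remaining dots are placed so that no new pillar is created. One must check this step is executable: the number of dots already placed is $K$, the value of the (lexicographically last) pillar of $\Lc_t$ summed appropriately — more simply, it is the total number of dots forced into $NW_1\cup\dots\cup NW_{N_t}$, and this number is at most $n$ because the regions $NW_i$ all lie strictly to the North-West of the lines $p=n$ and $q=n$; the remaining lines are free and the anti-diagonal placement rule fills them without obstruction. This produces a bona fide permutation $w':=\tr_t(w)$.

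The remaining point — and the one requiring the most care — is that the pillar set of the resulting permutation $w'$ is exactly $\Lc_1\sqcup\dots\sqcup\Lc_t$, neither more nor fewer. That the prescribed entries are pillars of $r(w')$ follows because the local NW-counts at those cells are the prescribed values $K_i$ (this is the defining property the algorithm maintains, via the relation $K_i=k_i+\sum_{j\prec i}k_j$ recalled in the Remark) and because the ``closest available lines'' placement rule guarantees the local pillar configuration~\eqref{ComPil} at each $(p_i,q_i)$. That there are no \emph{other} pillars is exactly the assertion quoted in the excerpt right after the algorithm (``the above algorithm is the only way to mark dots without creating an extra pillar''): the anti-diagonal placement within each NW-region, together with the freedom in the final step, creates no spurious configuration of the form~\eqref{ComPil}. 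The hard part is this last verification, since it amounts to a careful bookkeeping of which cells could accidentally satisfy both inequalities of~\eqref{ComPil}; I would argue it by the same local analysis used in the proof of Proposition~\ref{Property2} and the ``strip'' propositions of Section~\ref{PosPilSec}, applied region by region. Finally, uniqueness of $\tr_t(w)$ is immediate from Proposition~\ref{PiP} (a permutation is determined by its pillar entries), so once existence is established the statement follows.
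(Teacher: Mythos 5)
Your proposal is correct and follows essentially the same route as the paper: the paper's proof is precisely to stop the reconstruction algorithm of Section~\ref{wfrompil} after the step corresponding to the last pillar of $\Lc_t$ (which works because the classes are ordered so that these are exactly the first steps) and then jump to the final step, with uniqueness coming from the fact that the pillar entries determine the permutation. Your additional bookkeeping about the prefix property and the absence of spurious pillars only spells out what the paper asserts as properties of the algorithm.
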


\begin{proof}
This permutation is obtained by stopping the algorithm of reconstruction of $w$ given in Section \ref{wfrompil} after the step corresponding to the last pillar of the class $\Lc_{t}$ and jumping to the final step.
\end{proof}

\begin{ex}
For $w=(12,2,9,7,6,4,10,5,3,11,1,8)$ as in Example \ref{expillar}, 
the classes are numbered as follows
$$
\Lc_{1}=\{r_{22}=1\}, \;\Lc_{2}=\{r_{64}=2\},\; 
\Lc_{3}=\{r_{67}=4, \,r_{69}=5, \,r_{97}=6, \,r_{9,10}=8, \,r_{11,7}=7\}.
$$
One then obtains the truncated permutations 
$$
\tr_{1}(w)=(12, 2, 3, 11, 10, 9, 8, 7, 6, 5, 4, 1),
\qquad
\tr_{2}(w)=(12, 2, 11, 10, 9, 4, 8, 7, 6, 5, 3, 1).
$$
\end{ex}

\subsection{Elementary partial transpositions}\label{ElPTrSec}
The lexicographic order suggests a natural
series of admissible partial transpositions,
such that all the classes of linked pillar entries $\Lc_i$ transpose
for~$i$ less or equal to some value.
In this section we present en explicit algorithm
of calculating the resulting permutations.
This algorithm is the main ingredient of the proof
of Part (i) of Theorem~\ref{TheThm}.

For $t\in \{1, \ldots, s\}$, 
we define \textit{the elementary partial transposition} $w'=\trp_{t}(w)$, as the  
permutation having the following set of pillar entries:
$$
\left\{
\begin{array}{rclll}
r_{ji}(w')&=&r_{ij}(w), & \text{if} &r_{ij}(w) \in \Lc_{1}\sqcup \ldots \sqcup \Lc_{t};\\[6pt]
r_{ij}(w')&=&r_{ij}(w), & \text{if} &r_{ij}(w)  \in \Lc_{t+1}\sqcup \ldots \sqcup \Lc_{s}.\\
\end{array}
\right.
$$
Note that every partial transposition can be obtained as a sequence of elementary partial transpositions.

Given a permutation $w=w_1w_2\ldots{}w_n\in{}S_n$,
the entries $w_k$ of $w$ are separated into two disjoint groups, $I_1\sqcup{}I_2$:
$$
\left\{
\begin{array}{rclll}
w_k\in{}I_1,& \text{if}&k\leq{}\max(j), & \text{for pillars} 
&r_{ij}(w) \in \Lc_{1}\sqcup \ldots \sqcup \Lc_{t};\\[6pt]
w_k\in{}I_2,& \text{if}&k>{}\min(i), & \text{for pillars} 
&r_{ij}(w)  \in \Lc_{t+1}\sqcup \ldots \sqcup \Lc_{s}.\\
\end{array}
\right.
$$
The algorithm of calculation the permutation~$w'=w'_1w'_2\ldots{}w'_n$,
obtained via the above elementary partial transposition, 
consists in three steps:

\begin{enumerate}
\item
keep $w'_k=w_k\in{}I_1$ if $w_k\leq{}k$, and 
$w'_k=w_k\in{}I_2$ if $w_k\geq{}k$;

\item
inverse the entries $w_k\in{}I_1$, 
i.e., write $k$ at position $w_k$;

\item
fill the remaining positions in $w'$ in the decreasing order.
\end{enumerate}

The proof of the above algorithm is straightforward.

\begin{ex}
For the Coxeter element $w=2341\in{}S_4$, the elementary transposition
$$
\begin{array}{|c|c|c|c|c|}
\hline
0&0&0&0&0\\
\hline
0&0&\!\!\!\!{}^{{}^\bullet}\raisebox{.3pt}{\textcircled{\raisebox{-.9pt} {1}}}\!\!\!&1&1\\
\hline
0&0&1&\!\!\!\!{}^{{}^\bullet}\raisebox{.3pt}{\textcircled{\raisebox{-.9pt} {2}}}\!\!\!&2\\
\hline
0&0&1&2&\!\!\!\!{}^{{}^\bullet}\,3\\
\hline
0&\!\!\!\!{}^{{}^\bullet}\,1&2&3&4\\
\hline
\end{array}
\qquad
\longrightarrow
\qquad
\begin{array}{|c|c|c|c|c|}
\hline
0&0&0&0&0\\
\hline
0&0&0&\!\!\!\!{}^{{}^\bullet}\,1&1\\
\hline
0&\!\!\!\!{}^{{}^\bullet}\raisebox{.3pt}{\textcircled{\raisebox{-.9pt} {1}}}\!\!\!
&1&\!\!\raisebox{.3pt}{\textcircled{\raisebox{-.9pt} {2}}}\!\!\!&2\\
\hline
0&1&1&2&\!\!\!\!{}^{{}^\bullet}\,3\\
\hline
0&1&\!\!\!\!{}^{{}^\bullet}\,2&3&4\\
\hline
\end{array}
$$
is obtained into three steps:
$$
2\,3\,\vert\,4\,1
\quad\to\quad
2\,.\,\vert\,4\,.
\quad\to\quad
.\,1\,\vert\,4\,.
\quad\to\quad
3\,1\,\vert\,4\,2,
$$
so that $w'=3142$ is another Coxeter element, already considered in Example~\ref{Exn4}, b).
\end{ex}

For every $w\in{}S_n$,
the above algorithm implies the existence of
a permutation $w'$ such that the pillar entries of $r(w')$ are obtained by
an admissible partial transposition of pillar entries of $r(w)$.

Part (i) of Theorem~\ref{TheThm} is proved.

\section{Proof of the main theorem}
In this section, we prove Proposition \ref{PiP} and Part (ii) of Theorem \ref{TheThm}.

\subsection{A coordinate system}
In the neighborhood of the standard flag $F_0$,
the flag variety~$\F$ is identified with the subgroup of unitriangular matrices
\begin{equation}
\label{CoOrdEq}
X=
\left(
\begin{array}{cccc}
1&&&\\[4pt]
x_{21}&1&\\[4pt]
\vdots&\ddots&\ddots\\[4pt]
x_{n1}&\cdots&x_{n\,n-1}&\!\!1
\end{array}
\right)
\end{equation}
This defines a local coordinate system $(x_{21},\ldots,x_{n\,n-1})$ on $\F$,
already mentioned in the introduction (see also~\cite{Rya}).
Given a flag $F\in\F$, every space~$V_i$ of $F$ is defined as linear span of
the first $i$ columns of the matrix $X$.

Our next goal is to describe the Schubert cells and Schubert varieties in terms of this
coordinate system.
 
\subsection{Submatrices}\label{SubMS}
Let $M_{ij}$ be the $(n-j)\times{}i$ submatrix
of $X$
consisting of the last $n-j$ rows and the first $i$ columns.

a)
If $i\leq{}j$, then this submatrix is of the form
$$
M_{ij}=
\left(
\begin{array}{ccc}
x_{j+1\,1}&\cdots&x_{j+1\,i}\\
\vdots&&\vdots\\
x_{n1}&\cdots&x_{ni}
\end{array}
\right).
$$

b) If $i>j$, then the submatrix is as follows
$$
M_{ij}=
\left(
\begin{array}{llcc}
x_{j+1\,1}\cdots\; x_{j+1\,j}&1&\\
\vdots&\;\;\;\;\ddots\\
x_{i1}\;\;\;\;\;\cdots&x_{i\,i-1}&1\\
\vdots&&\vdots\\
x_{n1}\;\;\;\;\;\cdots&\cdots&x_{ni}
\end{array}
\right).
$$

\subsection{Relation to the rank matrices}\label{SectRRM}
The following lemma translates the description of Schubert cells
in terms of rank matrices into an algebraic descripiton
in the above coordinate system.

\begin{lem}
\label{DimLem}
The matrix $X$ represents a flag in the Schubert cell ${\mathcal C}_w$ if and only if 
\begin{equation}
\label{DimEq}
\rank(M_{ij})=
i-r_{ij}(w),
\end{equation}
for all $1<i\leq n$, $1\leq j<n$.
Consequently,
the Schubert variety $\X_w$ is described by the conditions: 
$$
\rank(M_{ij})\leq{}i-r_{ij}(w).
$$
\end{lem}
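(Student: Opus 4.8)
The plan is to translate the geometric condition ``$F \in \mathcal C_w$'' into the rank condition $\dim(V_i \cap \C^j) = r_{ij}(w)$ and then compute $\dim(V_i \cap \C^j)$ algebraically from the matrix $X$. Recall that $V_i = \langle v_1, \dots, v_i\rangle$ where $v_1, \dots, v_i$ are the first $i$ columns of $X$, and $\C^j = \langle \e_1, \dots, \e_j\rangle$ is the span of the first $j$ standard basis vectors. A vector $\sum_{k=1}^i a_k v_k$ lies in $\C^j$ precisely when all of its coordinates in rows $j+1, \dots, n$ vanish; writing this out, the coordinate vector $(a_1, \dots, a_i)^{\mathsf T}$ must lie in the kernel of exactly the submatrix $M_{ij}$ (the last $n-j$ rows, first $i$ columns of $X$). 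Hence $\dim(V_i \cap \C^j) = \dim \ker M_{ij} = i - \rank(M_{ij})$, since $M_{ij}$ has $i$ columns. This already gives the equivalence: $F \in \mathcal C_w$ iff $r_{ij}(w) = \dim(V_i\cap\C^j) = i - \rank(M_{ij})$ for all relevant $i,j$, which is precisely~\eqref{DimEq}. One must also note that the full rank matrix is determined by its interior entries (the border entries $r_{0k}, r_{k0}, r_{kn}, r_{nk}$ are forced), so it suffices to impose the condition for $1 < i \le n$, $1 \le j < n$; this is where the cases (a) $i \le j$ and (b) $i > j$ of the submatrix description from Section~\ref{SubMS} come in — in case (b) the submatrix contains an identity block, consistent with $r_{ij}$ being relatively large there.

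For the statement about the Schubert variety, the idea is that $\X_w = \overline{\mathcal C_w}$ is the union of $\mathcal C_w$ and all $\mathcal C_{w'}$ with $w' \le w$ in Bruhat order, and that $w' \le w$ is equivalent to $r_{ij}(w') \ge r_{ij}(w)$ for all $i,j$ (the ``smaller cell, bigger rank numbers'' principle recalled in the text, ultimately due to Ehresmann/Fulton). Translating through the equivalence just established, a flag $F$ with matrix $X$ lies in some $\mathcal C_{w'}$ with $w' \le w$ iff $i - \rank(M_{ij}) = r_{ij}(w') \ge r_{ij}(w)$ for all $i,j$, i.e. iff $\rank(M_{ij}) \le i - r_{ij}(w)$. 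Since every $X$ of the form~\eqref{CoOrdEq} represents a flag in some Schubert cell, this shows that the locus $\{\rank(M_{ij}) \le i - r_{ij}(w)\ \forall i,j\}$ inside the coordinate chart is exactly $\X_w$ intersected with the chart. I would remark that this intersection is Zariski dense in $\X_w$ (as the chart is a dense open subset of $\F$ meeting $\mathcal C_{F_0} \ni F_0$, hence meeting $\X_w$), so the rank inequalities indeed cut out $\X_w$ in the chart.

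The main obstacle — really the only nontrivial point — is bookkeeping: carefully checking that the kernel of $M_{ij}$ computes $V_i \cap \C^j$ correctly in both cases $i \le j$ and $i > j$, including edge cases where $i$ or $j$ is at the boundary of its range, and confirming that the interior entries $1 < i \le n$, $1 \le j < n$ really do suffice to pin down the permutation (equivalently, that no information is lost by discarding the border). I would also want to be slightly careful that ``$\rank(M_{ij}) \le i - r_{ij}(w)$ for all interior $(i,j)$'' genuinely forces $F$ into a single Bruhat interval below $w$, rather than merely into the union of cells whose rank matrices dominate $r(w)$ entrywise on the interior — but since the rank matrix is determined by its interior entries and entrywise domination of rank matrices is equivalent to Bruhat order, this is automatic. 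Everything else is linear algebra that I would not spell out in detail.
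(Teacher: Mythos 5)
Your proposal is correct and follows essentially the same route as the paper: the paper computes $\dim(V_i+\C^j)=j+\rank(M_{ij})$ and applies the dimension formula to get $\rank(M_{ij})=i-r_{ij}(w)$, while you compute $\dim(V_i\cap\C^j)=\dim\ker M_{ij}=i-\rank(M_{ij})$ directly; by rank--nullity these are the same one-line linear-algebra argument. Your additional care about the closure statement (Bruhat order being equivalent to entrywise domination of rank matrices) is sound and merely spells out what the paper leaves implicit in its ``Consequently''.
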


\begin{proof}
The space $\C^j$ consists of vectors with zeros
at positions $\geq{}j+1$.
One then has
$$
j+\rank(M_{ij})=\dim(V_i+\C^j)=i+j-r_{ij}(w).
$$
Hence \eqref{DimEq}.
\end{proof}

\subsection{Systems of equations for $\X_{w}$ and $\T_{w}$}\label{SysEq}
The Schubert variety $\X_w$ is determined,
in a neighborhood of the standard flag $F_0$, by a system of
polynomial equations in the variables~$x_{ij}$.
The equations are obtained as follows. For each couple of indices $i, j$, 
formula \eqref{DimEq} leads to a set of equations
that expresses the annihilation of the minors of the matrix
$M_{ij}$ of size larger than its rank. 
From Proposition~\ref{PiP}, it suffices to consider only the equations for 
the indices $i,j$ corresponding to a pillar entry $r_{ij}(w)$ in the rank matrix of 
 $\X_w$.
 
 The system of equations of the tangent cone $\T_w$ of $\X_{w}$ is
 obtained, roughly speaking, as the homogeneous lower degree parts of the equations of $\X_w$.
 More precisely, the equations of $\X_w$ can be written 
 in such a way that the homogeneous terms of lower degree are linearly independent.
 Then the system of $\T_w$ is obtained by removing all of the monomials of higher degree in the 
 equations of $\X_w$.
 
 \begin{ex}
 As we mentioned in Introduction,
 the first example of a Schubert variety with singularity at the origin correspond
 to the permutation $w=4231\in{}S_{4}$ (see~\cite{Lak,LS}).
 Written in our local coordinates:
 $$
 \left(
 \begin{array}{llll}
 1\\
 x_{21}&1\\
x_{31}&x_{32}&1\\
x_{41}&x_{42}&x_{43}&1
 \end{array}
 \right)
 $$
 the equation of the corresponding tangent cone $\T_{w}$
 (the same as the equation of $\X_{w}$) is: $x_{31}x_{42}-x_{32}x_{41}=0$.
 Indeed, the rank matrix of $w$ is as follows:
 $$
 \begin{array}{|c|c|c|c|c|}
\hline
0&0&0&0&0\\
\hline
0&0&0&0&\!\!\!\!{}^{{}^\bullet}\,1\\
\hline
0&0&\!\!\!\!{}^{{}^\bullet}\raisebox{.3pt}{\textcircled{\raisebox{-.9pt} {1}}}\!\!\!&1&2\\
\hline
0&0&1&\!\!\!\!{}^{{}^\bullet}\,2&3\\
\hline
0&\!\!\!\!{}^{{}^\bullet}\,1&2&3&4\\
\hline
\end{array}
$$
so that the Schubert cell ${\mathcal C}_{w}$ is determined by the condition
$\dim(V_{2}\cap{}\C^{2})=1$, that translates in coordinates
as the condition that a certain linear combination of two first column vectors
belong to the subspace $\C^{2}$, i.e., the matrix $M_{22}$ degenerates.

 The tangent cone~$\T_{w}$ is $5$-dimensional, whereas the
 Zariski tangent space is the whole $6$-dimensional tangent space~$T_{F_{0}}\F$.
  \end{ex}
 
\subsection{The duality}\label{key}
In the case $i\leq{}j$, the minors of $M_{ij}$ are homogeneous polynomial expressions.
The following observation explains the reason for which two pillar entries
transposed to each other, in many situation give the same contribution to the system of equation
of the tangent cones.

If $i>j$, then $M_{ji}$ is the complement of the upper right
square submatrix in $M_{ij}$ (of size~$i-j$) with $1$'s on the diagonal:
$$
M_{ij}=
\left(
\begin{array}{ll|ccc}
&&1&\\
&&\vdots&\ddots\\
&&&\cdots&1\\
\hline
&M_{ji}\\
\end{array}
\right).
$$
The lower degree homogeneous part in the expression of any minors of $M_{ij}$ of size $r\geq i-j$ involving the last $i-j$ columns corresponds
precisely to a minor of $M_{ji}$ of size $r-i+j$, and vice versa.

\subsection{Proof of Proposition \ref{PiP}}\label{ProProS}

Let us show that the pillar entries determine the rank matrix.
We use the fact that the rank matrix $r(w)$ completely determines the Schubert
variety $\X_{w}$. 
 
For a permutation $w$, let $r_{i_1j_1},\ldots,r_{i_Nj_N}$ be the pillar entries of the matrix $r(w)$ and let~$C_{ij}$ be the condition 
$$
\rank(M_{ij})\le i-r_{ij}(w)
$$
from the system of conditions determining the Schubert variety $\X_{w}$ (see Section 5.3). There are obvious implications:\smallskip
{\it
If $r_{i+1,j}=r_{ij}$, then $C_{ij}$ implies $C_{i+1,j}$;

If $r_{i+1,j}=r_{ij}+1$, then $C_{i+1,j}$ implies $C_{ij}$;

If $r_{i,j+1}=r_{ij}$, then $C_{i,j}$ implies $C_{i,j+1}$;

If $r_{i,j+1}=r_{ij}+1$, then $C_{i,j+1}$ implies $C_{ij}$.}\smallskip

Let us visualize the matrix $[C_{ij}]$ as an $(n+1)\times(n+1)$ grid and show the above implications by arrows between the neighboring cells; the resulting diagram for the matrix from 
Example~\ref{ExLink} is shown below.  
\begin{figure}[H]
\begin{center}
\includegraphics[width=4cm]{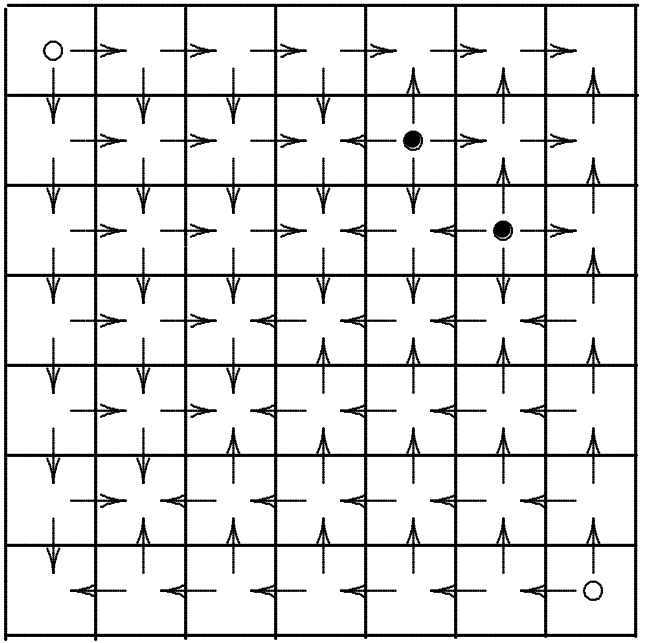}
\end{center}
\caption{The grid of implications.}
\label{TheGrid}
\end{figure}

\noindent
The cells of these grid with only outgoing arrows correspond to the pillars (marked by heavy dots in the picture) and two corner cells: upper left and lower right (marked by light dots). 

Take any cell of the grid and trace a path from it in the following way: we move in the direction opposite to the arrow. 
If there arises a choice of several such direction, choose any of them. If there is no such directions, then stop. 
It is important that our path never passes through a cell more that once: when we move right or upward, the entry stays unchanged, when we move left or downward, the entry grows (by one at a step). 
To move from a cell back to the same cell, we have to make at least one move left of downward, and the entry cannot remain unchanged. 
Thus our path leads from our (arbitrarily chosen) cell to either a pillar or to a corner cell. Moving along this path in the opposite direction, we show that every condition $C_{ij}$ follows either from one of the pillar conditions $C_{i_1j_1},\dots,C_{i_Nj_N}$ or one of the conditions $C_{0 0}, C_{nn}$, which are empty. 
Thus, the Schubert variety $\X_{w}$ is determined by the conditions $C_{i_1j_1},\dots,C_{i_Nj_N}$, that is, determined by the pillars.

To illustrate the above technique, let us calculate the tangent 
cone of the Schubert varieties corresponding to the Coxeter elements explicitly.

\begin{prop}
\label{expCoxProp}
The tangent cone of the Schubert varieties corresponding to the Coxeter elements of $S_n$
is given by the equations
\begin{equation}
\label{CoxTVEq}
x_{ij}=0,
\qquad\hbox{for}\quad
i-j>1.
\end{equation}
\end{prop}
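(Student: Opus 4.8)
The plan is to use Proposition~\ref{PiP} (or rather the argument of Section~\ref{ProProS}): since the pillar entries of a Coxeter element are exactly the $n-2$ entries of the form $r_{i,i+1}=i$ or $r_{i+1,i}=i$ for $i\in\{1,\ldots,n-2\}$ (Proposition~\ref{ExCoxSec}), the Schubert variety $\X_w$ is cut out by the conditions $C_{ij}$ attached to these pillars alone. So the first step is to write down, for a pillar at position $(i,i+1)$ with value $i$, the condition $\rank(M_{i,i+1})\le i-i=0$, i.e. $M_{i,i+1}=0$; and for a pillar at $(i+1,i)$ with value $i$, the condition $\rank(M_{i+1,i})\le (i+1)-i=1$.

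Next I would unwind these conditions in the coordinate system of Section~\ref{SubMS}. For the pillar $(i,i+1)$: $M_{i,i+1}$ is the $(n-i-1)\times i$ submatrix consisting of the last $n-i-1$ rows and first $i$ columns, so the condition $M_{i,i+1}=0$ says $x_{k\ell}=0$ for all $k\ge i+2$ and $\ell\le i$ — precisely the entries with $k-\ell\ge 2$ lying in that block. For the pillar $(i+1,i)$ (the case $i>j$ of Section~\ref{SubMS}): here $M_{i+1,i}$ has a $1$ in its top-right corner, and $\rank\le 1$ forces the vanishing of all $2\times 2$ minors; using the $1$ in the corner, these minors include $x_{k\ell}$ for $k\ge i+2$, $\ell\le i$, plus quadratic terms. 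Passing to the tangent cone means taking the lowest-degree homogeneous parts (Section~\ref{SysEq}); by the duality remark in Section~\ref{key}, the lowest-degree part of each such minor is again the linear form $x_{k\ell}$. So both types of pillar contribute exactly the linear equations $x_{k\ell}=0$ for $k-\ell\ge 2$ in the relevant range.

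Then I would check that, as $i$ ranges over $\{1,\ldots,n-2\}$ and we take the union of these blocks (whichever of the two pillar shapes is chosen at each level), we recover all of $\{x_{ij}:i-j>1\}$ and nothing more: the pillar at level $i$ controls exactly the $x_{k\ell}$ with $\ell\le i<i+1\le k-1$, i.e. $\ell\le i$ and $k\ge i+2$, and every pair $(k,\ell)$ with $k-\ell\ge 2$ satisfies $\ell\le i\le k-2$ for $i=\ell$ (and $\ell\le n-2$ since $\ell\le k-2\le n-2$), so it is caught at level $i=\ell$. Hence the tangent cone is cut out precisely by \eqref{CoxTVEq}; a dimension count against $\ell(w)=n-1$ and $\dim\F=\binom n2$ confirms we have not lost equations (the linear system \eqref{CoxTVEq} has exactly $\binom{n-1}{2}=\binom n2-(n-1)$ independent equations).

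The main obstacle I expect is the case-by-case bookkeeping for the two pillar shapes $(i,i+1)$ versus $(i+1,i)$: in the second case one must argue carefully that the higher-degree terms in the $2\times 2$ minors of $M_{i+1,i}$ do not survive passage to the tangent cone and that the linear parts already exhaust the block, which is where the duality observation of Section~\ref{key} does the real work. Everything else is a direct substitution into Lemma~\ref{DimLem}.
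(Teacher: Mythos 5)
Your argument is correct, but it takes a different route from the paper's. The paper first invokes Corollary~\ref{CoCor} (all Coxeter elements share one tangent cone) and then computes only for the single representative $w=s_1s_2\cdots s_{n-1}=234\ldots n1$, whose pillar entries all sit above the diagonal at positions $(i,i+1)$; for that element every defining condition is $\rank(M_{i,i+1})\le 0$, i.e.\ $M_{i,i+1}=0$, so $\X_w$ is already a linear subspace and no extraction of lowest-degree terms is needed at all. You instead work with an arbitrary Coxeter element and treat both pillar orientations, which forces you into the rank-one conditions $\rank(M_{i+1,i})\le 1$ and the analysis of lowest-degree parts of $2\times 2$ minors via the duality of Section~\ref{key}; this is the delicate step you correctly flag, and it does go through (the minors through the unit corner have linear lowest parts $x_{k\ell}$, and the remaining minors have quadratic lowest parts lying in the ideal generated by those linear forms). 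What your version buys is logical independence: it does not rely on Corollary~\ref{CoCor}, hence not on Theorem~\ref{TheThm}(ii), and in fact re-proves the coincidence of the Coxeter tangent cones by direct computation; the paper's version buys brevity by pushing all the work into the already-established corollary and choosing the one Coxeter element for which the variety is literally linear. Your closing dimension count against $\ell(w)=n-1$ is a useful sanity check absent from the paper.
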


\begin{proof}
It was already proved that 
Schubert varieties corresponding to the Coxeter elements
have the same tangent cone, see Corollary~\ref{CoCor}.
Let us consider the particular Coxeter element 
$$
w=s_1s_2\,\cdots\,{}s_{n-1}=234\ldots{}n1.
$$
Its rank matrix has the following pillar entries (cf. Proposition~\ref{ExCoxSec}):
$$
r_{12}=1,\qquad
r_{23}=2,\quad
\ldots,\quad
r_{n-2,n-1}=n-2.
$$
By Lemma~\ref{DimLem} and Proposition~\ref{PiP},
the Schubert variety $\X_{w}$ is locally determined by the conditions
$\rank(M_{ij})=0$, for~$i>j$.
Therefore, in the local coordinate system $(x_{ij})$, the Schubert variety~$\X_{w}$,
and thus its tangent cone $\T_w$,
is a linear subspace given by the equations~(\ref{CoxTVEq}).
\end{proof}

\subsection{Proof of Theorem~\ref{TheThm}}\label{proofdirect}
We will need the following lemma\footnote{
We are grateful to M. Kashiwara for a simple proof.}.

\begin{lem}
For every $w\in{}S_n$, the Schubert varieties
$\X_{w}$ and $\X_{w^{-1}}$ have same tangent cone.
\end{lem}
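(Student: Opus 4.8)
The plan is to exhibit an explicit linear automorphism of the tangent space $T_{F_0}\F \simeq \gn_-$ that carries $\T_w$ onto $\T_{w^{-1}}$. The natural candidate is the map induced by matrix transposition composed with an antidiagonal flip, which on strictly lower-triangular matrices gives a linear isomorphism $\gn_-\to\gn_-$; concretely, in the coordinates $x_{ij}$ of \eqref{CoOrdEq} it is (up to the antidiagonal symmetry of the grid) the substitution $x_{ij}\mapsto x_{n+1-j,\,n+1-i}$. First I would recall from Section~\ref{rkmat} (Example~\ref{InvCor}) that $r(w^{-1})$ is the transpose of $r(w)$, so by Lemma~\ref{DimLem} the defining conditions of $\X_{w^{-1}}$ are $\rank(M_{ij})\le i - r_{ji}(w)$.

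The key computation is to compare, for a pillar entry $r_{ij}(w)$, the minors of $M_{ij}$ (which cut out $\X_w$ near $F_0$) with the minors of the submatrix relevant to $\X_{w^{-1}}$ at the transposed position $(j,i)$. This is precisely the content of the duality observation in Section~\ref{key}: when $i>j$, the matrix $M_{ji}$ sits inside $M_{ij}$ as the complement of an upper-right unipotent block, and the lowest-degree homogeneous part of a minor of $M_{ij}$ of size $r\ge i-j$ through the last $i-j$ columns is, up to sign, a minor of $M_{ji}$ of size $r-i+j$. Passing to tangent cones kills exactly the higher-degree terms, so the tangent cone equations coming from the pillar at $(i,j)$ for $w$ and from the pillar at $(j,i)$ for $w^{-1}$ are interchanged by the transposition-flip substitution. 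Since by Proposition~\ref{PiP} it suffices to use the pillar conditions, this establishes $\T_{w^{-1}} = \sigma(\T_w)$ for the linear isomorphism $\sigma$, hence equality as abstract subvarieties, and in particular $\dim\T_w = \dim\T_{w^{-1}} = \ell(w)$, consistent with $\ell(w)=\ell(w^{-1})$.

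I expect the main obstacle to be the bookkeeping in the duality step: one must check that the lowest-degree parts of the chosen generating minors really are linearly independent (so that ``removing higher-degree monomials'' is a well-defined operation producing the tangent cone, as asserted in Section~\ref{SysEq}), and that the correspondence of minors respects this independence simultaneously on both sides. A clean way around the indexing headache is to argue more invariantly: transpose-and-flip is an automorphism of $\GL(n,\C)$ sending $B$ to $B$ and the permutation matrix of $w$ to that of $w^{-1}$ (after the flip), hence it sends $\X_w$ to $\X_{w^{-1}}$ as subvarieties of $\F$ fixing $F_0$, and therefore sends $\T_w$ to $\T_{w^{-1}}$; the coordinate picture above is then just the differential of this automorphism at $F_0$. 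I would present the invariant argument as the proof and keep the coordinate computation as the motivating remark tying it to Section~\ref{key}.
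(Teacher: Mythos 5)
There is a genuine gap, and it lies in the ``invariant argument'' that you intend to present as the actual proof. A transposition-based map of $\GL(n,\C)$ must be corrected by conjugation by $w_0$ in order to preserve $B$, and once you do that it no longer realizes $w\mapsto w^{-1}$: the automorphism $g\mapsto w_0(g^{T})^{-1}w_0$ sends the double coset $BwB$ to $B\,w_0ww_0\,B$, while the anti-automorphism $g\mapsto w_0g^{T}w_0$ sends it to $B\,w_0w^{-1}w_0\,B$ --- in neither case to $Bw^{-1}B$. Concretely, your substitution $x_{ij}\mapsto x_{n+1-j,\,n+1-i}$ is exactly the map $X\mapsto w_0X^{T}w_0$ on the chart~(\ref{CoOrdEq}), and it carries $\X_w$ onto $\X_{w_0w^{-1}w_0}$, not onto $\X_{w^{-1}}$; pushed through, your computation would prove $\sigma(\T_w)=\T_{w_0w^{-1}w_0}$, a different statement. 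There is a second, independent problem: even if you had a linear isomorphism $\sigma$ with $\sigma(\T_w)=\T_{w^{-1}}$, this only gives an isomorphism (``equality as abstract subvarieties'', as you write), whereas the lemma --- and its use in Example~\ref{InvCor} and in the proof of Theorem~\ref{TheThm}(ii) --- asserts that the two cones coincide as subsets of $T_{F_0}\F$. A nontrivial linear map such as the antidiagonal flip has no reason to preserve $\T_w$, so equality of subsets does not follow.

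The paper's proof uses the only natural map that genuinely exchanges $BwB$ and $Bw^{-1}B$ while preserving $B$, namely inversion $x\mapsto x^{-1}$. Being an anti-automorphism it does not descend to $\F$ globally, but it does act on the chart~(\ref{CoOrdEq}), fixes $F_0$, maps $\X_w$ (near $F_0$) homeomorphically onto $\X_{w^{-1}}$, and its differential at $F_0$ is $-\Id$; hence $\T_{w^{-1}}=-\T_w$, and since a tangent cone is stable under scalar multiplication, $\T_{w^{-1}}=\T_w$ as subsets --- exactly the equality required. Note also that the duality of Section~\ref{key} compares minors of $M_{ij}$ and $M_{ji}$ of the \emph{same} matrix $X$, in the same coordinates, with no substitution; a coordinate proof of the lemma would have to run through that identity (together with the initial-form/linear-independence issue you correctly flag), not through a transposition-and-flip change of variables.
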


\begin{proof}
The homeomorphism $x\mapsto x^{-1}$ from $BwB$ to $Bw^{-1}B$ induces the isomorphism $f\mapsto -f$ from $\T_{w}$ to $\T_{w^{-1}}$.
Since $-f\in\T_w$ for every $f\in\T_w$, this shows that $\T_{w^{-1}}=\T_{w}$.
\end{proof}

We are ready to prove Theorem~\ref{TheThm}, Part (ii).
Assume that two permutations,
$w$ and $w'$, are admissibly partially transpose to each other. 
We want to show that the tangent cones of $\X_{w}$ and $\X_{w'}$ coincide.

We can assume that $w'=\trp_{t}(w)$ is an elementary partial transposition of $w$, 
see Section~\ref{ElPTrSec} for the definition and the notation. 
The systems of equations for $\X_{w}$ and $\X_{w'}$ split in two parts: the equations coming from the pillar entries in the classes
$\Lc_{1}\sqcup \ldots \sqcup \Lc_{t}$ (these equations are \textit{a priori} different for $w$ and $w'$ since the pillar entries are not in the same positions) and the equations coming from the pillar entries in the other classes, namely in
$\Lc_{t+1}\sqcup \ldots \sqcup \Lc_{s}$.
The latter equations are identically the same for $w$ and $w'$. 

Consider finally the two subsystems of equations for $\X_{w}$ and $\X_{w'}$ coming from 
the pillar entries in the set
$\Lc_{1}\sqcup \ldots \sqcup \Lc_{t}$. 
These two subsystems are precisely those 
describing the Schubert varieties associated to 
$\tr_{t}(w)$ and  $\tr_{t}(w')$,
respectively. 
These two varieties have same tangent cones since $\tr_{t}(w)=\tr_{t}(w')^{-1}$. 
After intersecting with the tangent cone of the variety described by the rest of the system, one therefore obtains the same tangent cone for $\X_{w}$ and $\X_{w'}$.

Theorem~\ref{TheThm} is proved.

\section{Enumerative results}

Theorems~\ref{TheThm} gives an efficient method
for calculating the number of different tangent cones
of Schubert varieties.
In this section, we give the result in small dimensions and codimensions.
The general result is still out of reach.

Recall that the total number of Schubert varieties is $n!$,
the total number of their tangent cones is smaller.
It would be interesting to find asymptotic of the number
of tangent cones. 

\subsection{Low-dimensional cases}
In the case $n=4$, the comparative number of Schubert varieties
and their tangent cones, as a function of their dimension, is given by the following table.
$$
\begin{array}{r|c|c|c|c|c|c|c}
\dim&0&1&2&3&4&5&6\\[2pt]
\hline
\hbox{Schub}&1&3&5&6&5&3&1\\[2pt]
\hline
\hbox{TangCones}&1&3&3&3&3&2&1
\end{array}
$$
The total number of tangent cones in this case is $16$.

For $n=5$, the table is:
$$
\begin{array}{r|c|c|c|c|c|c|c|c|c|c|c}
\dim&0&1&2&3&4&5&6&7&8&9&10\\[2pt]
\hline
\hbox{Schub}&1&4&9&15&20&22&20&15&9&4&1\\[2pt]
\hline
\hbox{TangCones}&1&4&6&7&9&9&10&8&6&2&1
\end{array}
$$
The total number of tangent cones is $63$.

For $n=6$, the distribution of the tangent cones is as follows:
$$
\begin{array}{r|c|c|c|c|c|c|c|c|c|c|c|c|c|c|c|c|c}
\dim&0&1&2&3&4&5&6&7&8&9&10&11&12&13&14&15\\[2pt]
\hline
\hbox{TangCones}&1&5&10&14&20&25&31&36&40&40&34&24&15&8&3&1
\end{array}
$$
The total number of tangent cones for $n=6$ is $343$.

For $n=7$ and $8$, the total numbers of tangent cones are:
$1821$ and $13041$, respectively\footnote{These numbers are obtained using computer programs.}.
Note that the sequence $16,63,343,1821,13041,\ldots$
does not appear in Sloane's online Encyclopedia of Integer Sequences.

\subsection{Tangent cones of codimension $2$}

Let us also consider the case of small codimension.

The tangent cone of the Schubert variety $\X_{w_0}$ corresponding to longest element $w_0\in{}S_n$,
is the only one tangent cone of dimension $\frac{n(n-1)}{2}$.

Next, in the case of dimension $\frac{n(n-1)}{2}-1$ ({\it i.e.}, of codimension $1$),
there are $n-1$ Schubert varieties that have $\left[\frac{n}{2}\right]$ tangent cones.
Indeed, the elements $\X_w$ and $\X_{w^{-1}}$ have the same tangent cone.

There are $\frac{(n+2)(n-1)}{2}$ Schubert varieties of codimension $2$.
The number of their tangent cones depend on the parity of $n$, as given by the following
statement.

\begin{prop}
\label{EnumProp}
The number of tangent cones of codimension $2$ is:
$$
2+\frac{(n-3)(n+11)}{8},
\qquad\hbox{and}\qquad
3+\frac{(n-4)(n+14)}{8},
$$
for odd $n$, and for even $n$, respectively.
\end{prop}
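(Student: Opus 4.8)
\textbf{Proof plan for Proposition~\ref{EnumProp}.}
The plan is to enumerate explicitly the Schubert varieties of codimension $2$, group them into classes with identical tangent cones, and count the classes. First I would describe all $w\in S_n$ with $\ell(ww_0)=2$, equivalently $\mathrm{codim}({\mathcal C}_w)=2$. By~\eqref{codimcross}, these are the permutations whose diagram has exactly two crosses; using Theorem~\ref{codimDBF}, $\sum_i k_i(K_i+n-p_i-q_i)=2$, which forces either a single pillar with $k_1=2$ and $K_1+n-p_1-q_1=1$, or a single pillar with $k_1=1$ and $K_1+n-p_1-q_1=2$, or two pillars each contributing $1$ (so each $k_i=1$ and $K_i+n-p_i-q_i=1$). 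Translating back to permutations, the codimension-$2$ elements are: (a) $w$ obtained from $w_0$ by a length-$2$ descent of a single ``block'', giving permutations of the form with rank matrix having one pillar $r_{ij}=a$ with $i+j-a=n-2$ (the ``$w_{i,j,a,n}$'' family with one extra degree of freedom), and (b) products $s_\alpha s_\beta$-type elements with two dissociated or related pillars each of the near-maximal type. I would organize this by listing, for each $n$, the pillar data $\{(p,q,K)\}$ realizing codimension $2$.

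Next I would apply Theorem~\ref{TheThm} and Conjecture-free facts already proven: two such $w$ have the same tangent cone if their pillar data agree up to admissible partial transposition, and in codimension $2$ the linking structure is simple enough that I expect \emph{every} coincidence of tangent cones to be detected this way (this needs a small separate check, see below). For the one-pillar families, the pillar $r_{ij}=a$ and its transpose $r_{ji}=a$ give the same tangent cone by Lemma~5.12 (the $\X_w$, $\X_{w^{-1}}$ duality), so these pair up; I would count unordered pillar positions $\{(i,j),(j,i)\}$ with the right codimension constraint, which is an elementary lattice-point count producing a quadratic in $n$ whose parity correction accounts for the diagonal case $i=j$ (possible only when $n$ is even, giving the $+3$ versus $+2$ discrepancy). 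For the two-pillar configurations I would similarly count orbits under simultaneous/independent transposition of the (dissociated or linked) components, again a finite case analysis yielding quadratic contributions. Assembling: the number of tangent cones is (number of one-pillar orbits) $+$ (number of two-pillar orbits) $+$ (a bounded number of sporadic small-$n$ corrections), and collecting terms gives $2+\frac{(n-3)(n+11)}{8}$ for odd $n$ and $3+\frac{(n-4)(n+14)}{8}$ for even $n$.

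The main obstacle I anticipate is \textbf{not} the enumeration of codimension-$2$ permutations (routine) but verifying that there are \emph{no} coincidences of tangent cones beyond those produced by admissible partial transpositions --- i.e., that in codimension $2$ the sufficient condition of Theorem~\ref{TheThm} is also necessary. The example $6745321$/$6753421$ in $S_7$ (codimension~$9$, not~$2$) shows such extra coincidences do occur in general, so one must genuinely rule them out here. I would handle this by writing down the actual polynomial equations of $\T_w$ via Lemma~\ref{DimLem}: a codimension-$2$ tangent cone is cut out by two equations, each a minor (or the degree-$\le 2$ part of a minor) of one of the submatrices $M_{ij}$, and I would classify these low-degree affine cones up to linear equivalence (a linear form, a quadric of rank $\le 4$, a product of two linear forms, etc.) and match them with the pillar data. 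This classification is finite and explicit, and I expect it to confirm that the tangent cone is determined, up to the ambient linear coordinate change, exactly by the unordered pillar data. Once that is in hand, the count is a bookkeeping exercise, with the separate treatment of odd and even $n$ forced by whether a symmetric pillar position $i=j$ (whence $2i-a=n-2$ solvable in integers) contributes.
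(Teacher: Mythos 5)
The paper itself gives no details for this Proposition (its proof reads ``a straightforward calculation''), so the real question is whether your plan would reproduce the stated numbers, and as written it would not, because of one concrete error. You dismiss the pair $6745321$, $6753421$ in $S_7$ as having ``codimension $9$, not $2$'' and on that basis expect that in codimension $2$ every coincidence of tangent cones is detected by admissible partial transpositions. In fact $6745321$ has exactly two non-inversions, hence length $19=\binom{7}{2}-2$, so both permutations have codimension $2$: their rank matrices have two \emph{linked} pillar entries ($r_{16}=1$ together with $r_{34}=1$, resp.\ $r_{43}=1$), and the equality $\T_{6745321}=\T_{6753421}$ is precisely the paper's example of a non-admissible transposition of a single linked pillar that nevertheless preserves the cone. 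Such nested two-pillar configurations (two value-$1$ pillars on the antidiagonal $p+q=n$) exist for every $n\ge 7$, and the identifications they force are needed for the count: grouping codimension-$2$ permutations only by the admissible partial transpositions of Theorem~\ref{TheThm} yields $(n-1)+\frac{(n-1)(n-3)}{4}$ classes for odd $n$, e.g.\ $12$ for $n=7$ and $15$ for $n=8$, whereas Proposition~\ref{EnumProp} asserts $11$ and $14$. The stated formulas are exactly what one obtains by letting the two pillars of a type-C configuration be transposed \emph{independently} (and then checking that no further coincidences occur between the resulting classes). So the ``small separate check'' you defer is not a verification of non-coincidence but its opposite: you must prove that transposing only the inner pillar of a nested codimension-$2$ pair preserves the tangent cone, and then redo the grouping; your later remark that the cone should be determined by the ``unordered pillar data'' is in fact the correct equivalence, but it contradicts your stated expectation, and it is the statement that actually has to be established.

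Apart from this, your enumeration framework is essentially right: codimension $2$ forces either a single pillar of value $1$ with $p+q=n-1$, a single pillar of value $2$ with $p+q=n+1$, or two value-$1$ pillars on the antidiagonal $p+q=n$ at distance at least $2$. But the parity bookkeeping is off: a self-transposed single pillar $p=q$ requires $p+q=n\mp1$ to be even, i.e.\ it occurs for \emph{odd} $n$, not even $n$; the even/odd discrepancy in the final formulas comes from the orbit counts (the pairing of the $n-2$ single-pillar positions and the middle antidiagonal position $(n/2,n/2)$, which exists only for even $n$), and the constants $+2$ versus $+3$ are artifacts of how the quadratic is written rather than a single sporadic diagonal case. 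These points are repairable, but as it stands the proposal would prove a formula different from the one in the Proposition for all $n\ge 7$.
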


\begin{proof}
A straightforward calculation.
\end{proof}

\section*{Appendix}

\subsection*{A1 Comparison of pillar entries to essential entries}
Below are a series of examples 
and comments about the relationship between pillar entries and Fulton's essential entries,
see also~\cite{Woo}.
Recall that essential entries are boxed (while pillar entries are encircled as above).

Let us consider examples that emphasize the difference between
the notions of essential and pillar entries.
The most interesting case is that of the Coxeter elements.

\begin{ex}
\label{BothEx}
a)
The rank matrix of the element $w_0=4\,3\,2\,1$
in $S_4$ has three essential entries
$$
\begin{array}{|c|c|c|c|c|}
\hline
0&0&0&0&0\\
\hline
0&0&0&\!\mybox{0}\!\!&\!\!\!\!{}^{{}^\bullet}\,1\\
\hline
0&0&\!\mybox{0}\!\!&\!\!\!\!{}^{{}^\bullet}\,1&2\\
\hline
0&\!\mybox{0}\!\!&\!\!\!\!{}^{{}^\bullet}\,1&2&3\\
\hline
0&\!\!\!\!{}^{{}^\bullet}\,1&2&3&4\\
\hline
\end{array}
$$
and no pillar entries.
It can be deduced from formula~(\ref{Ful}), that,
for an arbitrary $n$,
the only rank matrix without pillar entries is
the matrix $r(w_0)$ of the longest element $w_0\in{}S_n$.
This matrix has $n-2$ essential entries along the antidiagonal.

b)
For each of the elements $w_1=2\,1\,4\,3$ and $w_2=4\,2\,3\,1$ of $S_4$,
we have two essential entries and one pillar:
$$
\begin{array}{|c|c|c|c|c|}
\hline
0&0&0&0&0\\
\hline
0&\!\mybox{0}\!\!&\!\!\!\!{}^{{}^\bullet}\,1&1&1\\
\hline
0&\!\!\!\!{}^{{}^\bullet}\,1&\!\!\raisebox{.3pt}{\textcircled{\raisebox{-.9pt} {2}}}\!\!\!&2&2\\
\hline
0&1&2&\!\mybox{2}\!\!&\!\!\!\!{}^{{}^\bullet}\,3\\
\hline
0&1&2&\!\!\!\!{}^{{}^\bullet}\,3&4\\
\hline
\end{array}
\qquad\hbox{and}\qquad
\begin{array}{|c|c|c|c|c|}
\hline
0&0&0&0&0\\
\hline
0&0&0&\!\mybox{0}\!\!&\!\!\!\!{}^{{}^\bullet}\,1\\
\hline
0&0&\!\!\!\!{}^{{}^\bullet}\raisebox{.3pt}{\textcircled{\raisebox{-.9pt} {1}}}\!\!\!&1&2\\
\hline
0&\!\mybox{0}\!\!&1&\!\!\!\!{}^{{}^\bullet}\,2&3\\
\hline
0&\!\!\!\!{}^{{}^\bullet}\,1&2&3&4\\
\hline
\end{array}
$$
Note that the position of the pillar entry in the above matrices is the same,
while those of the essential entries are different.

c)
For the Coxeter elements of $S_{4}$, we have:
$$
\begin{array}{|c|c|c|c|c|}
\hline
0&0&0&0&0\\
\hline
0&0&\!\!\!\!{}^{{}^\bullet}\raisebox{.3pt}{\textcircled{\raisebox{-.9pt} {1}}}\!\!\!&1&1\\
\hline
0&0&1&\!\!\!\!{}^{{}^\bullet}\raisebox{.3pt}{\textcircled{\raisebox{-.9pt} {2}}}\!\!\!&2\\
\hline
0&\!\mybox{0}\!\!&1&2&\!\!\!\!{}^{{}^\bullet}\,3\\
\hline
0&\!\!\!\!{}^{{}^\bullet}\,1&2&3&4\\
\hline
\end{array}
\qquad
\begin{array}{|c|c|c|c|c|}
\hline
0&0&0&0&0\\
\hline
0&0&\!\!\!\!{}^{{}^\bullet}\raisebox{.3pt}{\textcircled{\raisebox{-.9pt} {1}}}\!\!\!&1&1\\
\hline
0&\!\mybox{0}\!\!&1&\!\mybox{1}\!\!&\!\!\!\!{}^{{}^\bullet}\,2\\
\hline
0&\!\!\!\!{}^{{}^\bullet}\,1&\!\!\raisebox{.3pt}{\textcircled{\raisebox{-.9pt} {2}}}\!\!\!&2&3\\
\hline
0&1&2&\!\!\!\!{}^{{}^\bullet}\,3&4\\
\hline
\end{array}
\qquad
\begin{array}{|c|c|c|c|c|}
\hline
0&0&0&0&0\\
\hline
0&0&\!\mybox{0}\!\!&\!\!\!\!{}^{{}^\bullet}\,1&1\\
\hline
0&\!\!\!\!{}^{{}^\bullet}\raisebox{.3pt}{\textcircled{\raisebox{-.9pt} {1}}}\!\!\!
&1&\!\!\raisebox{.3pt}{\textcircled{\raisebox{-.9pt} {2}}}\!\!\!&2\\
\hline
0&1&\!\mybox{1}\!\!&2&\!\!\!\!{}^{{}^\bullet}\,3\\
\hline
0&1&\!\!\!\!{}^{{}^\bullet}\,2&3&4\\
\hline
\end{array}
\qquad
\begin{array}{|c|c|c|c|c|}
\hline
0&0&0&0&0\\
\hline
0&0&0&\!\mybox{0}\!\!&\!\!\!\!{}^{{}^\bullet}\,1\\
\hline
0&\!\!\!\!{}^{{}^\bullet}\raisebox{.3pt}{\textcircled{\raisebox{-.9pt} {1}}}\!\!\!
&1&1&2\\
\hline
0&1&\!\!\!\!{}^{{}^\bullet}\raisebox{.3pt}{\textcircled{\raisebox{-.9pt} {2}}}\!\!\!&2&3\\
\hline
0&1&2&\!\!\!\!{}^{{}^\bullet}\,3&4\\
\hline
\end{array}
$$
\end{ex}

\medskip

\subsection*{A2 Rothe diagrams and opposite Rothe diagrams}\label{RiTdSec}

The Rothe diagram~\cite{Rot} of a permutation $w\in S_{n}$ 
is an $n\times{}n$ square table obtained according to the following rule.
Dot the cell $(i,j)$ whenever $w(i)=j$, 
shade all the cells of the row at the right of the dotted cell 
and all the cells of the column below the dotted cell 
(including the dotted cell).
Note that the length $\ell(w)$ is equal to the number of white cells in the Rothe diagram.

It was noticed in \cite{Ful1}, that the white cells having a South and East 
frontier with the shaded region give the positions of the essential entries 
in the corresponding rank matrix.
The value of an essential entry is equal to
the number of dots in the upper left quadrant
of the Rothe diagram with the origin at the corresponding cell.
Let us explain a similar rule to obtain positions of pillar entries.

\begin{table}[H]
\begin{tabular}{|c|c|c|c|}
\hline 
&\cellcolor{lightgray}\textbullet&\cellcolor{lightgray}&\cellcolor{lightgray}\\
\hline 
&\cellcolor{lightgray}&\cellcolor{lightgray}\textbullet&\cellcolor{lightgray}\\
\hline 
\!\raisebox{.9pt}{\boxed{\raisebox{.9pt} {\,}}}\!
&\cellcolor{lightgray}&\cellcolor{lightgray}&\cellcolor{lightgray}\textbullet\\
\hline 
\cellcolor{lightgray}\textbullet&\cellcolor{lightgray}&\cellcolor{lightgray}&\cellcolor{lightgray}\\
\hline
\end{tabular}
\qquad
\begin{tabular}{|c|c|c|c|}
\hline 
\cellcolor{lightgray}&\!\!\raisebox{.3pt}{\textcircled{\raisebox{-.9pt} {\textbullet}}}\!\!
&\cellcolor{lightgray}&\cellcolor{lightgray}\\
\hline 
\cellcolor{lightgray}&\cellcolor{lightgray}&\!\!\raisebox{.3pt}{\textcircled{\raisebox{-.9pt} {\textbullet}}}\!\!
&\cellcolor{lightgray}\\
\hline 
\cellcolor{lightgray}&\cellcolor{lightgray}&\cellcolor{lightgray}&\textbullet\\
\hline 
\textbullet&&&\\
\hline
\end{tabular}
\medskip
\caption{The Rothe diagram (left) and the opposite Rothe diagram (right) 
of the Coxeter permutation $2\,3\,4\,1$.
The Rothe diagram gives the unique essential entry in the rank matrix: $r_{31}=0$,
whereas the opposite diagram gives two pillar entries:
$r_{12}=1$ and $r_{23}=2$.}
\end{table}

Consider the {\it opposite Rothe diagram} obtained with the following rule.
Shade all the cells of the row strictlty at the left of the dotted cell 
and all the cells of the column strictly above the dotted cell
(the dotted cell is not shaded).
Note that the number of white undotted cells in the opposite Rothe diagram is equal to $\ell(w)$.

It follows directly from Definition~\ref{PilDef}, that
the white cells having a South and East frontier with the shaded region in the opposite Rothe diagram
give the positions of the pillar entries in the corresponding rank matrix.
The value of a pillar entry is equal to 
the number of dots in the upper left quadrant of the diagram.

\medskip

{\bf Acknowledgements.}
 This work was started during our stay at the American Institute of Mathematics
within the research program SQuaRE.
The final version was completed during a RiP stay at 
the Mathematisches Forschungsinstitut Oberwolfach.
We are grateful to AIM and MFO for their hospitality.
We are also grateful to M. Kashiwara and A. Panov for helpful discussions,
and to the referee for his/her helpful comments.
S. M-G. was partially supported by the ANR project SC$^{3}$A, ANR-15-CE40-0004-01.

\goodbreak


\end{document}